\numberwithin{equation}{section}
\theoremstyle{plain}
\newtheorem{theorem}[equation]{Theorem}
\newtheorem{lemma}[equation]{Lemma}
\newtheorem{proposition}[equation]{Proposition}
\theoremstyle{definition}
\newtheorem{definition}[equation]{Definition}
\newtheorem{example}[equation]{Example}
\newtheorem*{openproblem}{Open Problem}
\numberwithin{equation}{section}
\newcommand{\R}{{\mathbb R}}
\newcommand{\N}{{\mathbb N}}
\newcommand{\Om}{\Omega}
\providecommand{\vint}[1]{\mathchoice
          {\mathop{\vrule width 5pt height 3 pt depth -2.5pt
                  \kern -9pt \kern 1pt\intop}\nolimits_{\kern -5pt{#1}}}
          {\mathop{\vrule width 5pt height 3 pt depth -2.6pt
                  \kern -6pt \intop}\nolimits_{\kern -3pt{#1}}}
          {\mathop{\vrule width 5pt height 3 pt depth -2.6pt
                  \kern -6pt \intop}\nolimits_{\kern -3pt{#1}}}
          {\mathop{\vrule width 5pt height 3 pt depth -2.6pt
                  \kern -6pt \intop}\nolimits_{\kern -3pt{#1}}}}
\newcommand{\eps}{\varepsilon}
\newcommand{\loc}{\mathrm{loc}}
\newcommand{\BV}{\mathrm{BV}}
\newcommand{\liploc}{\mathrm{Lip}_{\mathrm{loc}}}
\newcommand{\ch}{\text{\raise 1.3pt \hbox{$\chi$}\kern-0.2pt}}
\DeclareMathOperator{\capa}{Cap}
\DeclareMathOperator{\rcapa}{cap}
\DeclareMathOperator{\dist}{dist}
\DeclareMathOperator{\diam}{diam}
\DeclareMathOperator{\Lip}{Lip}
\DeclareMathOperator{\supp}{spt}
\DeclareMathOperator{\fint}{fine-int}
\begin{document}
\title{A new Cartan-type property and strict quasicoverings when $p=1$ in metric spaces
\footnote{{\bf 2010 Mathematics Subject Classification}: 30L99, 31E05, 26B30.
\hfill \break {\it Keywords\,}: metric measure space, function of bounded variation, fine topology, Cartan property, strict quasicovering, fine Newton-Sobolev space
}}
\author{Panu Lahti}
\maketitle

\begin{abstract}
In a complete metric space that is equipped with a doubling measure
and supports a Poincar\'e inequality, we prove a new Cartan-type property
for the fine topology in the case
$p=1$. Then we use this property to prove the existence of $1$-finely open \emph{strict subsets} and \emph{strict quasicoverings} of $1$-finely open sets. As an application, we study fine
Newton-Sobolev spaces in the case $p=1$, that is,
Newton-Sobolev spaces defined on $1$-finely open sets.
\end{abstract}

\section{Introduction}

Nonlinear fine potential theory in metric spaces has been
studied in several papers in recent years, see \cite{BBL-SS,BBL-CCK,BBL-WC}.
Much of nonlinear potential theory, for $1<p<\infty$,
deals with $p$-harmonic functions, which are
local minimizers of the $L^p$-norm of $|\nabla u|$.
Such minimizers can be defined also in metric measure spaces by using \emph{upper gradients},
and the notion can be extended to the case $p=1$ by considering
functions of least gradient, which are $\BV$ functions that minimize the total variation locally; see Section \ref{preliminaries} for definitions.

Nonlinear fine potential theory is concerned with studying $p$-harmonic functions
and related superminimizers by means of the \emph{p-fine topology}.
For nonlinear fine potential theory and its history in the Euclidean setting, for $1<p<\infty$, see especially the monographs \cite{AH,HKM,MZ}, as well as
the monograph \cite{BB} in the metric setting.
The typical assumptions of a metric space, which we make also in this paper, 
are that the space is complete, equipped with a doubling measure, and supports
a Poincar\'e inequality.

A central result in fine potential theory is the \emph{(weak) Cartan property}
for superminimizer functions.
In \cite{L-WC} we proved the following formulation of this property in the
case $p=1$.

\begin{theorem}[{\cite[Theorem 1.1]{L-WC}}]\label{thm:weak Cartan property}
	Let $A\subset X$ and let $x\in X\setminus A$ such that $A$
	is $1$-thin at $x$.
	Then there exist $R>0$ and $u_1,u_2\in\BV(X)$ that are $1$-superminimizers in $B(x,R)$
	such that $\max\{u_1^{\wedge},u_2^{\wedge}\}=1$ in $A\cap B(x,R)$ and
	$u_1^{\vee}(x)=0=u_2^{\vee}(x)$.
\end{theorem}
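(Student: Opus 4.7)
The plan is to translate the $1$-thinness of $A$ at $x$ into a quantitative Wiener-type summability of $1$-capacities in dyadic annuli, and then to build the superminimizers as telescoping sums of obstacle solutions for $A$ restricted to those annuli. Concretely, I expect $1$-thinness at $x$ to yield a sequence of radii $r_k \searrow 0$ such that, writing $A_k := A \cap (B(x,r_k) \setminus B(x,r_{k+1}))$, the $\BV$-capacities $\capa_1(A_k, B(x, 2r_k))$ decay summably against the natural isoperimetric weight $\mu(B(x,r_k))/r_k$. For each $k$ I would select an almost-minimizer $E_k$ of this capacity, i.e.\ a set of finite perimeter containing $A_k$ in a density sense with $P(E_k, X)$ comparable to $\capa_1(A_k, B(x, 2r_k))$, and arrange $E_k$ to lie essentially in the $k$th annulus. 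Then $u := \sum_k \ch_{E_k}$ (or a monotone limit of partial suprema) is a candidate: choosing $R \in (0, r_1)$ makes $A \cap B(x,R) \subset \bigcup_k E_k$, giving $u^{\wedge} = 1$ on $A \cap B(x,R)$, while the summability of perimeters ensures $u \in \BV(X)$ and forces the density of $\bigcup_k E_k$ at $x$ to vanish, yielding $u^{\vee}(x) = 0$.

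The delicate feature of the $p = 1$ case, which is exactly why two functions are needed rather than one, is that at jump points the two approximate limits $u^{\wedge}$ and $u^{\vee}$ differ. A single construction of the type above may fail to give $u^{\wedge} = 1$ on jump-type points of $\bigcup_k E_k$ lying in $A$, because there $u^{\wedge} = 0$ while $u^{\vee} = 1$. I would handle this by running the construction twice with complementary choices: once with obstacle sets $E_k^{(1)}$ approaching $A$ from "inside" (so the jump set of $u_1$ lies on one side of $A$), and once with sets $E_k^{(2)}$ approaching $A$ from "outside", for example by replacing capacity minimizers by obstacle-problem solutions with the opposite inequality and setting $u_2 = 1 - \ch_{X \setminus F}$ in suitable pieces. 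The jump set of $u_1$ misses (in $\wedge$-sense) one portion of $A$ and the jump set of $u_2$ misses the complementary portion, so $\max\{u_1^{\wedge}, u_2^{\wedge}\} = 1$ pointwise on $A \cap B(x, R)$.

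The principal obstacle is verifying simultaneously that the infinite construction gives genuine $1$-superminimizers in $B(x, R)$ and that $u_i^{\vee}(x) = 0$. For the superminimizer property, $\ch_{E_k}$ alone is a $1$-superminimizer inside $B(x, 2r_k)$ provided $E_k$ actually solves an obstacle problem there (not merely realizes capacity up to a constant), so I would need to set up $E_k$ as a minimizer of $P(\,\cdot\,, B(x, 2r_k))$ among sets containing $A_k$; stability of $1$-superminimizers under monotone $\BV$-convergence of partial sums then propagates the property to $u_i$. For the value $u_i^{\vee}(x) = 0$, it is not enough to have vanishing density almost everywhere; one needs the upper approximate limit, which is controlled by $\mu(\{u_i > 0\} \cap B(x,r))/\mu(B(x,r))$ as $r \to 0$, and this is where the quantitative thinness hypothesis (not merely polarity) is essential, through a Maz'ya-type lower bound translating capacity decay into measure decay. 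Matching the two sides of this bookkeeping — choosing the annular radii $r_k$ so that perimeters are summable and densities vanish while the $E_k$ still cover $A$ — is the heart of the argument.
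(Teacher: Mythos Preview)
This theorem is quoted from \cite{L-WC} rather than proved in the present paper, but the paper restates a sharper form (Theorem~\ref{thm:weak Cartan property in text}) and records enough of the original construction in \eqref{eq:E0 and E1 contain...}--\eqref{eq:perimeter estimate for E1} to allow a comparison.

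Your dyadic-annulus decomposition and the use of obstacle-type minimal sets are indeed the backbone of the actual argument, but the reason two functions are required is not the one you give, and your proposed repair would likely fail. In the true construction one first passes, by outer regularity of $\rcapa_1$, to an \emph{open} set $W\supset A$ that is still $1$-thin at $x$; since each $W\cap H_j$ is open and contained in the corresponding obstacle solution, every point of $A\cap B(x,R)$ is automatically a density-$1$ point of the relevant set, so $\ch_E^{\wedge}=1$ there with no jump-set analysis at all. Your ``inside/outside'' dichotomy has no clear meaning for a general subset of a metric space and is not needed. The genuine obstruction to using a single function is geometric: the obstacle solution over $W\cap H_j$ does not stay inside $H_j$ but spreads into an enlarged annulus $\tfrac54 B_j\setminus\tfrac45 B_{j+1}$ (see \eqref{eq:E0 and E1 included in annuli}), and \emph{consecutive} such enlarged annuli overlap, so neither $\sum_j\ch_{E_j}$ nor $\ch_{\bigcup_j E_j}$ is a priori a $1$-superminimizer in $B(x,R)$. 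The fix is to split the indices into \emph{even} and \emph{odd}: the pieces of $E_0:=\bigcup_{j\ \mathrm{even}}E_j$ are then at positive distance from one another, and likewise for $E_1$, which is exactly what lets the local superminimizer property pass to the union. Thus $u_1=\ch_{E_0}$ and $u_2=\ch_{E_1}$ are single characteristic functions, not telescoping sums.
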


In \cite{L-CK} we used this property to prove the so-called Choquet property
concerning finely open and \emph{quasiopen} sets in the case $p=1$, similarly as can be done
when $1<p<\infty$ (see \cite{BBL-CCK}).
On the other hand, it is natural to consider an alternative version of the weak Cartan property.
In the case $p>1$, superminimizers are Newton-Sobolev functions, but in the 
case $p=1$ they are only $\BV$ functions and so
the question arises whether the functions $u_1,u_2$ above can be
replaced by a Newton-Sobolev function (even though it would no longer be
a superminimizer).
In Theorem \ref{thm:Cartan Sobolev version} we show that such a new Cartan-type
property indeed holds.

It is said that a set $A$ is a $p$-\emph{strict} subset of a set $D$ if there exists
a Newton-Sobolev function $u\in N^{1,p}(X)$ such that $u=1$ on $A$ and
$u=0$ on $X\setminus D$.
In \cite{BBL-SS} it was shown that if $U$ is a $p$-finely open set
($1<p<\infty$) and $x\in U$, then
there exists a $p$-finely open strict subset $V\Subset U$ such that $x\in V$.
The proof was based on the weak Cartan property.
In Theorem \ref{thm:strict subsets with Newtonian functions}
we show that the analogous result is true in the case $p=1$. Here we
need the Cartan-type property involving a Newton-Sobolev function
(instead of the $\BV$ superminimizer functions).

This result on the existence of $1$-strict subsets can be combined with the \emph{quasi-Lindel\"of
principle} to prove the existence of \emph{strict quasicoverings} of
$1$-finely open
sets, that is, countable coverings by $1$-finely open strict subsets.
We do this in Proposition \ref{prop:existence of strict quasicovering},
and it is again analogous to the case $1<p<\infty$, see \cite{BBL-SS}.
Such coverings will be useful in future research when considering partition of unity arguments in finely
open sets. In this paper, we apply strict quasicoverings in defining and studying
\emph{fine Newton-Sobolev spaces}, that is, Newton-Sobolev spaces defined on finely open
or quasiopen sets. In the case $1<p<\infty$, these were studied in
\cite{BBL-SS}.
In Section \ref{sec:fine sobolev} we show that the theory we have
developed allows us to prove directly analogous results in the case $p=1$.

\section{Preliminaries}\label{preliminaries}

In this section we introduce the notation, definitions,
and assumptions used in the paper.

Throughout this paper, $(X,d,\mu)$ is a complete metric space that is equip\-ped
with a metric $d$ and a Borel regular outer measure $\mu$ that satisfies
a doubling property, meaning that
there exists a constant $C_d\ge 1$ such that
\[
0<\mu(B(x,2r))\le C_d\mu(B(x,r))<\infty
\]
for every ball $B(x,r):=\{y\in X:\,d(y,x)<r\}$.
We also assume that $X$ supports a $(1,1)$-Poincar\'e inequality defined below,
and that $X$ contains at least 2 points.
For a ball $B=B(x,r)$ and $a>0$, we sometimes abbreviate $aB:=B(x,ar)$;
note that in metric spaces, a ball (as a set) does not necessarily have a unique center and radius,
but we will always understand these to be predetermined for the balls that we consider.
By iterating the doubling condition, we obtain
for any $x\in X$ and any $y\in B(x,R)$ with $0<r\le R<\infty$ that
\begin{equation}\label{eq:homogeneous dimension}
\frac{\mu(B(y,r))}{\mu(B(x,R))}\ge \frac{1}{C_d^2}\left(\frac{r}{R}\right)^{Q},
\end{equation}
where $Q>1$ only depends on the doubling constant $C_d$.
When we want to state that a constant $C$
depends on the parameters $a,b, \ldots$, we write $C=C(a,b,\ldots)$.
When a property holds outside a set of $\mu$-measure zero, we say that it holds
almost everywhere, abbreviated a.e.

As a complete metric space equipped with a doubling measure,
$X$ is proper, that is, closed and bounded sets are compact.
For any $\mu$-measurable set $D\subset X$, we define $\liploc(D)$ to be the
space of functions $u$ on $D$ such that for every $x\in D$ there exists $r>0$
such that $u\in \Lip(D\cap B(x,r))$.
For an open set $\Omega\subset X$, a function $u\in \liploc(\Om)$ is then
in $\Lip(\Om')$ for every open $\Omega'\Subset\Omega$; this notation
means that $\overline{\Omega'}$ is a
compact subset of $\Omega$. Other local spaces of functions are defined analogously.

For any $A\subset X$ and $0<R<\infty$, the restricted Hausdorff content
of codimension one is defined to be
\[
\mathcal{H}_{R}(A):=\inf\left\{ \sum_{i=1}^{\infty}
\frac{\mu(B(x_{i},r_{i}))}{r_{i}}:\,A\subset\bigcup_{i=1}^{\infty}B(x_{i},r_{i}),\,r_{i}\le R\right\}.
\]
The codimension one Hausdorff measure of $A\subset X$ is then defined to be
\[
\mathcal{H}(A):=\lim_{R\rightarrow 0}\mathcal{H}_{R}(A).
\]

All functions defined on $X$ or its subsets will take values in $[-\infty,\infty]$.
By a curve we mean a nonconstant rectifiable continuous mapping from a compact interval of the real line
into $X$.
A nonnegative Borel function $g$ on $X$ is an upper gradient 
of a function $u$
on $X$ if for all curves $\gamma$, we have
\begin{equation}\label{eq:definition of upper gradient}
|u(x)-u(y)|\le \int_\gamma g\,ds,
\end{equation}
where $x$ and $y$ are the end points of $\gamma$
and the curve integral is defined by using an arc-length parametrization,
see \cite[Section 2]{HK} where upper gradients were originally introduced.
We interpret $|u(x)-u(y)|=\infty$ whenever  
at least one of $|u(x)|$, $|u(y)|$ is infinite.

Let $1\le p<\infty$;
we are going to work solely with $p=1$, but we give
definitions that cover all values of $p$ where it takes no extra work.
We say that a family of curves $\Gamma$ is of zero $p$-modulus if there is a 
nonnegative Borel function $\rho\in L^p(X)$ such that 
for all curves $\gamma\in\Gamma$, the curve integral $\int_\gamma \rho\,ds$ is infinite.
A property is said to hold for $p$-almost every curve
if it fails only for a curve family with zero $p$-modulus. 
If $g$ is a nonnegative $\mu$-measurable function on $X$
and (\ref{eq:definition of upper gradient}) holds for $p$-almost every curve,
we say that $g$ is a $p$-weak upper gradient of $u$. 
By only considering curves $\gamma$ in a set $D\subset X$,
we can talk about a function $g$ being a ($p$-weak) upper gradient of $u$ in $D$.

Let $D\subset X$ be a $\mu$-measurable set.
We define the norm
\[
\Vert u\Vert_{N^{1,p}(D)}:=\Vert u\Vert_{L^p(D)}+\inf \Vert g\Vert_{L^p(D)},
\]
where the infimum is taken over all $p$-weak upper gradients $g$ of $u$ in $D$.
The usual Sobolev space $W^{1,p}$ is replaced in the metric setting by the Newton-Sobolev space
\[
N^{1,p}(D):=\{u:\|u\|_{N^{1,p}(D)}<\infty\},
\]
which was first introduced in \cite{S}.
We understand every Newton-Sobolev function to be defined at every $x\in D$
(even though $\Vert \cdot\Vert_{N^{1,p}(D)}$ is then only a seminorm).
It is known that for any $u\in N_{\loc}^{1,p}(D)$, there exists a minimal $p$-weak
upper gradient of $u$ in $D$, always denoted by $g_{u}$, satisfying $g_{u}\le g$ 
a.e. on $D$ for any $p$-weak upper gradient $g\in L_{\loc}^{p}(D)$
of $u$ in $D$, see \cite[Theorem 2.25]{BB}.

For any $D\subset X$, the space of Newton-Sobolev functions with zero boundary values is defined to be
\[
N_0^{1,p}(D):=\{u|_{D}:\,u\in N^{1,p}(X)\textrm{ and }u=0\textrm { on }X\setminus D\}.
\]
This is a subspace of $N^{1,p}(D)$ when $D$ is $\mu$-measurable, and it can always
be understood to be a subspace of $N^{1,p}(X)$.

The $p$-capacity of a set $A\subset X$ is
\[
\capa_p(A):=\inf \Vert u\Vert_{N^{1,p}(X)},
\]
where the infimum is taken over all functions $u\in N^{1,p}(X)$ such that $u\ge 1$ on $A$.
If a property holds outside a set
$A\subset X$ with $\capa_p(A)=0$, we say that it holds $p$-quasieverywhere, or $p$-q.e.
If $D\subset X$ is $\mu$-measurable, then
\begin{equation}\label{eq:quasieverywhere equivalence classes}
\Vert u\Vert_{N^{1,p}(D)}=0\quad \textrm{if}\quad u=0\  p\textrm{-q.e. on }D,
\end{equation}
see \cite[Proposition 1.61]{BB}.

We know that $\capa_p$ is an outer capacity, meaning that
\[
\capa_p(A)=\inf_{\substack{W\textrm{ open} \\A\subset W}}\capa_p(W)
\]
for any $A\subset X$, see e.g. \cite[Theorem 5.31]{BB}.
By \cite[Theorem 4.3, Theorem 5.1]{HaKi}, for any $A\subset X$ it holds that
\begin{equation}\label{eq:null sets of Hausdorff measure and capacity}
\capa_{1}(A)=0\quad\textrm{if and only if}\quad\mathcal H(A)=0.
\end{equation}

We say that a set $U\subset X$ is $p$-quasiopen if for every $\eps>0$
there is an open set $G\subset X$ such that $\capa_p(G)<\eps$ and
$U\cup G$ is open. We say that a function $u$ defined on a
set  $D\subset X$ is $p$-quasicontinuous on $D$ if for every $\eps>0$ there
is an open set $G\subset X$ such that $\capa_p(G)<\eps$ and $u|_{D\setminus G}$
is continuous (as a real-valued function).
It is a well-known fact that Newton-Sobolev functions are quasicontinuous;
for a proof of the following theorem, see \cite[Theorem 1.1]{BBS} or \cite[Theorem 5.29]{BB}.

\begin{theorem}\label{thm:quasicontinuity}
	Let $\Om\subset X$ be open and let $u\in N^{1,p}(\Om)$ (with $1\le p<\infty$).
	Then $u$ is $p$-quasicontinuous on $\Om$.
\end{theorem}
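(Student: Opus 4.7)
The plan is to approximate $u$ in $N^{1,p}$ norm by continuous (Lipschitz) functions, which are dense in $N^{1,p}(X)$ under our standing assumptions, pass to a subsequence that converges fast enough, and then show that the set where these approximants differ substantially from $u$ has small $p$-capacity. A Borel--Cantelli-style argument, combined with the outer regularity of $\capa_p$, then produces open exceptional sets of arbitrarily small capacity outside of which the convergence is uniform, forcing $u$ to coincide with a continuous function on the complement.

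First I would reduce to the global case $\Om=X$. Cover $\Om$ by a countable family of balls $B_j=B(x_j,r_j)$ whose concentric doubles $2B_j$ are compactly contained in $\Om$, and let $\eta_j$ be a Lipschitz cutoff equal to $1$ on $B_j$ and vanishing outside $2B_j$. The product $\eta_j u$, extended by zero, belongs to $N^{1,p}(X)$ via the Leibniz rule for upper gradients together with the $L^p$-integrability of $u$ and $g_u$ on the compact set $\overline{2B_j}$. Granting the global case, pick open sets $G_j\subset X$ with $\capa_p(G_j)<\eps/2^j$ on whose complements $\eta_j u$ (hence $u$ restricted to $B_j$) is continuous. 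Then $G:=\bigcup_j G_j$ has $\capa_p(G)<\eps$ by subadditivity, and since each $B_j\setminus G$ is open in $\Om\setminus G$ and $u$ is continuous on each of them, $u$ is continuous on $\Om\setminus G$.

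For the global case, fix $u\in N^{1,p}(X)$ and choose $u_k\in\Lip(X)$ with $\|u-u_k\|_{N^{1,p}(X)}\le 4^{-k}$. The function $2^k|u-u_k|$ is admissible for the $p$-capacity of the set $E_k:=\{|u-u_k|>2^{-k}\}$, giving
\[
\capa_p(E_k)\le 2^k\,\|u-u_k\|_{N^{1,p}(X)}\le 2^{-k}.
\]
By outer regularity of $\capa_p$, choose open $G_k\supset E_k$ with $\capa_p(G_k)\le 2^{-k+1}$, and set $V_m:=\bigcup_{k\ge m}G_k$. Countable subadditivity of $\capa_p$ (obtained from the definition by combining admissible functions termwise) yields $\capa_p(V_m)\le 2^{-m+2}$. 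On $X\setminus V_m$ we have $|u-u_k|\le 2^{-k}$ for every $k\ge m$, so the continuous functions $u_k$ converge uniformly to $u$ there, and the restriction $u|_{X\setminus V_m}$ is continuous. For any $\eps>0$, picking $m$ with $2^{-m+2}<\eps$ and $G:=V_m$ completes the proof.

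The main obstacle is the density of Lipschitz functions in $N^{1,p}(X)$; this is a nontrivial fact that depends critically on the doubling condition and Poincar\'e inequality, typically obtained via discrete convolutions or maximal-function estimates, and is particularly delicate when $p=1$. Granted that input, the argument above is a careful but essentially routine capacity estimate, with the only additional bookkeeping being that the Leibniz-rule cutoff in the localization step must produce a genuine $N^{1,p}(X)$ function.
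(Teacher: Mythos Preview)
The paper does not give its own proof of this theorem; it simply quotes the result and refers to \cite[Theorem 1.1]{BBS} and \cite[Theorem 5.29]{BB}. Your sketch is precisely the classical argument used in those references: reduce to $\Om=X$ via Lipschitz cutoffs, invoke density of Lipschitz functions in $N^{1,p}(X)$ (which, as you correctly note, is the only nontrivial input and requires the doubling and Poincar\'e hypotheses), pass to a rapidly convergent subsequence, bound $\capa_p(\{|u-u_k|>2^{-k}\})$ by the norm of the scaled difference, and use outer regularity plus countable subadditivity to obtain uniform convergence off a small open set.

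One small bookkeeping remark: with the paper's convention $\capa_p(A)=\inf\|u\|_{N^{1,p}(X)}$ (norm, not $p$-th power), your estimate $\capa_p(E_k)\le 2^{k}\|u-u_k\|_{N^{1,p}(X)}\le 2^{-k}$ is exactly right; in sources that define the capacity with $\|\cdot\|^p$ the constants would need the obvious adjustment. Otherwise there is nothing to correct.
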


The variational $p$-capacity of a set $A\subset D$
with respect to $D\subset X$ is given by
\[
\rcapa_p(A,D):=\inf \int_X g_u^p \,d\mu,
\]
where the infimum is taken over functions $u\in N_0^{1,p}(D)$ such that
$u\ge 1$ on $A$, and $g_u$ is the minimal $p$-weak upper gradient of $u$ (in $X$).
By truncation, we see that we can also assume that $0\le u\le 1$ on $X$
(and the same applies to the $p$-capacity).
For basic properties satisfied by capacities, such as monotonicity and countable subadditivity, see \cite{BB,BB-VC}.

Next we recall the definition and basic properties of functions
of bounded variation on metric spaces, following \cite{M}. See also
the monographs \cite{AFP, EvaG92, Fed, Giu84, Zie89} for the classical 
theory in the Euclidean setting.
Let $\Om\subset X$ be an open set.
Given $u\in L^1_{\loc}(\Om)$, the total variation of $u$ in $\Om$ is defined
to be
\[
\Vert Du\Vert(\Om):=\inf\left\{\liminf_{i\to\infty}\int_\Om g_{u_i}\,d\mu:\, u_i\in \Lip_{\loc}(\Om),\, u_i\to u\textrm{ in } L^1_{\loc}(\Om)\right\},
\]
where each $g_{u_i}$ is the minimal $1$-weak upper gradient of $u_i$ in $\Om$.
(In \cite{M}, local Lipschitz constants were used instead of upper gradients, but
the properties of the total variation can be proved similarly with either definition.)
We say that a function $u\in L^1(\Om)$ is of bounded variation, 
and denote $u\in\BV(\Om)$, if $\|Du\|(\Om)<\infty$.
For an arbitrary set $A\subset X$, we define
\[
\|Du\|(A):=\inf_{\substack{W\textrm{ open} \\A\subset W}}\|Du\|(W).
\]
If $u\in L^1_{\loc}(\Om)$ and $\Vert Du\Vert(\Omega)<\infty$, then
$\|Du\|(\cdot)$ is a Radon measure on $\Omega$ by \cite[Theorem 3.4]{M}.
A $\mu$-measurable set $E\subset X$ is said to be of finite perimeter if $\|D\ch_E\|(X)<\infty$, where $\ch_E$ is the characteristic function of $E$.
The perimeter of $E$ in $\Omega$ is also denoted by
$P(E,\Omega):=\|D\ch_E\|(\Omega)$.

The lower and upper approximate limits of a function $u$ on $X$
are defined respectively by
\[
u^{\wedge}(x):
=\sup\left\{t\in\R:\,\lim_{r\to 0}\frac{\mu(B(x,r)\cap\{u<t\})}{\mu(B(x,r))}=0\right\}
\]
and
\[
u^{\vee}(x):
=\inf\left\{t\in\R:\,\lim_{r\to 0}\frac{\mu(B(x,r)\cap\{u>t\})}{\mu(B(x,r))}=0\right\}.
\]
Unlike Newton-Sobolev functions, we understand $\BV$ functions to be
$\mu$-equivalence classes. To consider fine properties, we need to
consider the pointwise representatives $u^{\wedge}$ and $u^{\vee}$.

We will assume throughout the paper that $X$ supports a $(1,1)$-Poincar\'e inequality,
meaning that there exist constants $C_P>0$ and $\lambda \ge 1$ such that for every
ball $B(x,r)$, every $u\in L^1_{\loc}(X)$,
and every upper gradient $g$ of $u$,
we have
\begin{equation}\label{eq:poincare inequality}
\vint{B(x,r)}|u-u_{B(x,r)}|\, d\mu 
\le C_P r\vint{B(x,\lambda r)}g\,d\mu,
\end{equation}
where 
\[
u_{B(x,r)}:=\vint{B(x,r)}u\,d\mu :=\frac 1{\mu(B(x,r))}\int_{B(x,r)}u\,d\mu.
\]
The $(1,1)$-Poincar\'e inequality implies the following Sobolev inequality:
if $x\in X$, $0<r<\frac{1}{4}\diam X$, and $u\in N_0^{1,1}(B(x,r))$, then
\begin{equation}\label{eq:sobolev inequality}
\int_{B(x,r)} |u|\,d\mu\le C_S r \int_{B(x,r)}  g_u\,d\mu
\end{equation}
for some constant $C_S=C_S(C_d,C_P)\ge 1$, see \cite[Theorem 5.51]{BB}.
By applying this to approximating functions in the definition of the total
variation, we obtain for any $x\in X$, $0<r<\frac{1}{4}\diam X$,
and any $\mu$-measurable set $E\subset B(x,r)$
\begin{equation}\label{eq:isop inequality with zero boundary values}
\mu(E)\le C_S r P(E,X).
\end{equation}

Next we define the fine topology in the case $p=1$.
\begin{definition}\label{def:1 fine topology}
We say that $A\subset X$ is $1$-thin at the point $x\in X$ if
\[
\lim_{r\to 0}r\frac{\rcapa_1(A\cap B(x,r),B(x,2r))}{\mu(B(x,r))}=0.
\]
We say that a set $U\subset X$ is $1$-finely open if $X\setminus U$ is $1$-thin at every $x\in U$. Then we define the $1$-fine topology as the collection of $1$-finely open sets on $X$ (see \cite[Lemma 4.2]{L-FC} for a proof of the fact that this is indeed a topology).

We denote the $1$-fine interior of a set $H\subset X$, i.e. the largest $1$-finely open set contained in $H$, by $\fint H$. We denote the $1$-fine closure of $H\subset X$, i.e. the smallest $1$-finely closed set containing $H$, by $\overline{H}^1$. We define the \emph{1-base} $b_1 H$ of $H\subset X$
to be the set of points in $X$ where $H$ is \emph{not} $1$-thin.

We say that a function $u$ defined on a set $U\subset X$ is $1$-finely continuous at $x\in U$ if it is continuous at $x$ when $U$ is equipped with the induced $1$-fine topology on $U$ and $[-\infty,\infty]$ is equipped
with the usual topology.
\end{definition}

By \cite[Proposition 6.16]{BB}, for all $x\in X$ and $0<r<\tfrac 18 \diam X$
(in fact, the second inequality holds for all $r>0$)
\begin{equation}\label{eq:variational capacity basic estimate}
\frac{\mu(B(x,r))}{2C_S r}\le \rcapa_1(B(x,r),B(x,2r))\le \frac{C_d \mu(B(x,r))}{r},
\end{equation}
and so obviously $W\subset b_1 W$ for any open set $W\subset X$.

The following fact is given in \cite[Proposition 3.3]{L-Fed}:

\begin{equation}\label{eq:capacity of fine closure}
\capa_1(\overline{A}^1)=\capa_1(A)\quad\textrm{for any }A\subset X.
\end{equation}

The following result describes the close relationship between finely open and
quasiopen sets.

\begin{theorem}[{\cite[Corollary 6.12]{L-CK}}]\label{thm:quasiopen and finely open}
A set $U\subset X$ is $1$-quasiopen if and only if it is the union of a
$1$-finely open set and a $\mathcal H$-negligible set.
\end{theorem}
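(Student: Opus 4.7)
The plan is to prove the two implications of the equivalence separately; both hinge on treating the discrepancy between $U$ and a $1$-finely open set as a capacity-zero remainder.

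For the direction $(\Leftarrow)$, assume $U=V\cup N$ with $V$ $1$-finely open and $\mathcal H(N)=0$; by \eqref{eq:null sets of Hausdorff measure and capacity} this is equivalent to $\capa_1(N)=0$. Given $\eps>0$, the outer capacity property yields an open $G_1\supset N$ with $\capa_1(G_1)<\eps/2$. Invoking the (previously established) fact that every $1$-finely open set is itself $1$-quasiopen, pick an open $G_2$ with $\capa_1(G_2)<\eps/2$ such that $V\cup G_2$ is open. Then
\[
U\cup G_1\cup G_2=(V\cup G_2)\cup G_1
\]
is open while $\capa_1(G_1\cup G_2)<\eps$ by subadditivity, so $U$ is $1$-quasiopen.

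For the direction $(\Rightarrow)$, suppose $U$ is $1$-quasiopen and, for each $k\in\N$, choose an open $G_k$ with $\capa_1(G_k)<2^{-k}$ such that $U\cup G_k$ is open. I would take $V:=\fint U$ as the candidate $1$-finely open set and reduce everything to showing $\capa_1(U\setminus V)=0$. The key auxiliary observation is that $b_1 A\subset\overline{A}^1$ for every $A\subset X$: if $x\notin\overline{A}^1$, then $X\setminus\overline{A}^1$ is a $1$-finely open neighbourhood of $x$, so $\overline{A}^1$, and hence $A$ by monotonicity of thinness, is $1$-thin at $x$. Combined with \eqref{eq:capacity of fine closure} this yields
\[
\capa_1(b_1 G_k)\le\capa_1(\overline{G_k}^1)=\capa_1(G_k)<2^{-k}.
\]

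Now for any $x\in U\setminus(G_k\cup b_1 G_k)$ one has $x\in U\cup G_k$, which is open, so some ball $B(x,r)$ satisfies $B(x,r)\subset U\cup G_k$, giving $(X\setminus U)\cap B(x,r)\subset G_k$. Since $x\notin b_1 G_k$, the set $G_k$ is $1$-thin at $x$, and the inclusion together with monotonicity of thinness (only radii smaller than $r$ matter in Definition~\ref{def:1 fine topology}) forces $X\setminus U$ to be $1$-thin at $x$ as well, i.e.\ $x\in\fint U=V$. Hence $U\setminus V\subset G_k\cup b_1 G_k$ with $\capa_1(G_k\cup b_1 G_k)<2^{1-k}$, and letting $k\to\infty$ yields $\capa_1(U\setminus V)=0$; \eqref{eq:null sets of Hausdorff measure and capacity} then delivers the required $\mathcal H(U\setminus V)=0$.

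The main obstacle is the structural inclusion $b_1 A\subset\overline{A}^1$, together with the companion fact (used in $(\Leftarrow)$) that $1$-finely open sets are $1$-quasiopen; these are genuinely nontrivial features of the $p=1$ fine topology, since at $p=1$ one cannot simply quote the capacitary potentials available when $p>1$. Once both ingredients are in hand, everything else is straightforward subadditivity and outer-capacity bookkeeping.
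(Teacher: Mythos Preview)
The paper does not prove this statement at all; it is quoted from \cite[Corollary 6.12]{L-CK} and used as a black box throughout. There is therefore no in-paper proof to compare your proposal against.

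That said, your argument is correct and well organized. For the direction $(\Rightarrow)$, the inclusion $b_1 A\subset\overline A^1$ that you flag as an obstacle is in fact immediately available from the paper's toolkit: in the proof of Theorem~\ref{thm:strict subsets with Newtonian functions} the identity $\overline A^1=A\cup b_1 A$ is cited from \cite[Corollary 3.5]{L-Fed}, and your inclusion is a trivial consequence. With that and \eqref{eq:capacity of fine closure} in hand, your trapping of $U\setminus\fint U$ inside $G_k\cup b_1 G_k$ goes through cleanly. For the direction $(\Leftarrow)$ you correctly isolate the real content as the implication ``$1$-finely open $\Rightarrow$ $1$-quasiopen''; this is precisely the special case $N=\emptyset$ of the theorem, and it is what \cite{L-CK} establishes first (via the weak Cartan property and a Choquet-type argument) before deducing the full equivalence. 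So your reduction is legitimate, but it does not bypass the hard analysis carried out in \cite{L-CK}; it repackages the same ingredients.
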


For an open set $\Om\subset X$,
we denote by $\BV_c(\Om)$ the class of functions $\varphi\in\BV(\Om)$ with compact
support in $\Om$, that is, $\supp \varphi\Subset \Om$.

\begin{definition}
	We say that $u\in\BV_{\loc}(\Om)$ is a 
	$1$-minimizer  in $\Om$ if
	for all $\varphi\in \BV_c(\Om)$,
	\begin{equation}\label{eq:definition of 1minimizer}
	\Vert Du\Vert(\supp\varphi)\le \Vert D(u+\varphi)\Vert(\supp\varphi).
	\end{equation}
	We say that $u\in\BV_{\loc}(\Om)$ is a $1$-superminimizer in $\Om$
	if \eqref{eq:definition of 1minimizer} holds for all nonnegative $\varphi\in \BV_c(\Om)$.
\end{definition}

More precisely, we should talk about $\supp |\varphi|^{\vee}$, since
$\varphi$ is only a.e. defined.
In the literature, $1$-minimizers are usually called functions of least gradient.

\section{A new Cartan-type property}

In this section we prove the new Cartan-type property, given in 
Theorem \ref{thm:Cartan Sobolev version}.
First we take note of a few results that we will need in the proofs;
the following is given in \cite[Lemma 11.22]{BB}.

\begin{lemma}\label{lem:capacity wrt different balls}
	Let $x\in X$, $r>0$, and $A\subset B(x,r)$. Then for every $1<s<t$
	with $tr<\frac 14 \diam X$, we have
	\[
	\rcapa_1(A,B(x,tr))\le \rcapa_1(A,B(x,sr))\le C_S\left(1+\frac{t}{s-1}\right)\rcapa_1(A,B(x,tr)),
	\]
	where $C_S$ is the constant from
	the Sobolev inequality \eqref{eq:sobolev inequality}.
\end{lemma}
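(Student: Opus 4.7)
The first inequality is essentially immediate from the definitions. Any $u\in N_0^{1,1}(B(x,sr))$ with $u\ge 1$ on $A$, extended by zero, belongs to $N_0^{1,1}(B(x,tr))$ with the same minimal $1$-weak upper gradient on $X$, so the admissible class for $\rcapa_1(A,B(x,tr))$ contains that for $\rcapa_1(A,B(x,sr))$, and the infimum decreases.

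For the second (and more substantial) inequality, the plan is to start with an arbitrary admissible function $u$ for $\rcapa_1(A,B(x,tr))$, which by the truncation remark following the definition of the variational capacity we may take to satisfy $0\le u\le 1$, and multiply it by a Lipschitz cutoff to push its support into $B(x,sr)$. Concretely, I would define
\[
\eta(y):=\min\!\left\{1,\max\!\left\{0,\frac{sr-d(x,y)}{(s-1)r}\right\}\right\},
\]
which equals $1$ on $B(x,r)\supset A$, vanishes outside $B(x,sr)$, and is $1/((s-1)r)$-Lipschitz, hence admits $1/((s-1)r)\cdot\ch_{B(x,sr)}$ as an upper gradient. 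Then $v:=\eta u\in N_0^{1,1}(B(x,sr))$, $v\ge 1$ on $A$, and by the Leibniz rule for upper gradients together with $0\le u,\eta\le 1$,
\[
g_v\ \le\ \eta g_u+u g_\eta\ \le\ g_u+\frac{u}{(s-1)r}\ch_{B(x,sr)}.
\]

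Integrating and using that $u$ is supported in $B(x,tr)$, I obtain
\[
\int_X g_v\,d\mu\ \le\ \int_X g_u\,d\mu+\frac{1}{(s-1)r}\int_{B(x,tr)} u\,d\mu.
\]
The second term is where the Sobolev inequality \eqref{eq:sobolev inequality} is used: since $u\in N_0^{1,1}(B(x,tr))$ and $tr<\tfrac14\diam X$, we have $\int_{B(x,tr)} u\,d\mu\le C_S\, tr\int_X g_u\,d\mu$. Substituting yields
\[
\int_X g_v\,d\mu\ \le\ \left(1+\frac{C_S t}{s-1}\right)\int_X g_u\,d\mu\ \le\ C_S\left(1+\frac{t}{s-1}\right)\int_X g_u\,d\mu,
\]
using $C_S\ge 1$. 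Taking the infimum over admissible $u$ for $\rcapa_1(A,B(x,tr))$ gives the claim.

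The only mildly delicate point is the product-rule estimate for the minimal upper gradient of $\eta u$: it follows from the standard Leibniz inequality for (weak) upper gradients together with the bounds $0\le u\le 1$ and $0\le\eta\le 1$, so no real obstacle arises. The entire argument is thus the capacity analogue of the standard cutoff-plus-Sobolev trick, with the constant $C_S(1+t/(s-1))$ coming directly from the Sobolev constant and the Lipschitz constant of $\eta$.
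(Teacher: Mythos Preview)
Your argument is correct and is precisely the standard cutoff-plus-Sobolev proof. Note, however, that the paper does not give its own proof of this lemma at all: it simply cites \cite[Lemma 11.22]{BB}, so there is no ``paper's proof'' to compare against beyond the fact that your argument is essentially the one found in that reference.
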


\begin{theorem}[{\cite[Theorem 3.16]{L-WC}}]\label{thm:superminimizers are lsc}
	Let $u$ be a $1$-superminimizer in an open set $\Om\subset X$. Then $u^{\wedge}\colon\Om\to (-\infty,\infty]$
	is lower semicontinuous.
\end{theorem}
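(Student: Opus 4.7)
The plan is to prove that for every $t\in\R$, the set $\{x\in\Om:u^{\wedge}(x)>t\}$ is open in $\Om$. Fix $x_0\in\Om$ with $u^{\wedge}(x_0)>t$ and choose $s\in(t,u^{\wedge}(x_0))$. By definition of $u^{\wedge}$, $\mu(B(x_0,r)\cap\{u<s\})/\mu(B(x_0,r))\to 0$ as $r\to 0$, so for every $\tau\in(t,s)$ the superlevel set $E_\tau:=\{u>\tau\}$ has density $1$ at $x_0$. The task therefore reduces to showing that density $1$ persists on a whole ball around $x_0$ for some $\tau\in(t,s)$, which forces $u^{\wedge}\ge\tau>t$ on that ball.

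The first step is to extract a one-sided perimeter-minimality of $E_\tau$ from the $\BV$ superminimizer property. Given a prospective competitor $F_0\Subset V\Subset\Om$, I would test the superminimizer inequality against $\phi_\eps:=v_\eps-u$, where $v_\eps:=\max\{u,(\tau+\eps)\chi_{F_0}\}$. The superlevel sets of $v_\eps$ coincide with those of $u$ outside the narrow band $[\tau,\tau+\eps)$, on which they equal $E_s\cup F_0$. The coarea formula then localizes the superminimizer inequality $\|Du\|(V)\le\|Dv_\eps\|(V)$ to
\[
\int_\tau^{\tau+\eps}\bigl[P(E_s\cup F_0,V)-P(E_s,V)\bigr]\,ds\ge 0,
\]
and dividing by $\eps$ and letting $\eps\to 0$ yields $P(E_\tau,V)\le P(E_\tau\cup F_0,V)$ for a.e.\ $\tau\in\R$.

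The second and main step is to propagate the density-$1$ condition from $x_0$ to a neighborhood. I would use the one-sided minimality with carefully chosen competitors that fill in the complement $B(x_0,r_0)\setminus E_\tau$; since $E_\tau$ has density $1$ at $x_0$, this complement has arbitrarily small measure relative to $\mu(B(x_0,r_0))$ at small scales. The relative isoperimetric inequality from \eqref{eq:isop inequality with zero boundary values} forces a lower bound on $P(E_\tau,B(y,r))$ whenever a point $y\in B(x_0,r_0)$ has density strictly less than $1$ in $E_\tau$ at some scale $r$, and this lower bound, together with the upper bound on the perimeter of a ball coming from \eqref{eq:variational capacity basic estimate}, must be balanced against the perimeter contribution of the competitor. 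Iterating across scales should yield an $r_0>0$ on which every $y$ has density $1$ in $E_\tau$, hence $u^{\wedge}(y)\ge\tau>t$.

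The main obstacle is this last density-propagation step. It is not a routine dichotomy: the naive competitor $E_\tau\cup B(y,r)$ only yields $P(E_\tau,B(y,r))\le P(B(y,r),X)$, both of order $\mu(B(y,r))/r$, so it does not by itself exclude intermediate densities at $y$. One genuinely needs to couple the nearby density-$1$ behavior at $x_0$ with the local minimality at $y$---most naturally via competitors supported on a whole ball around $x_0$ rather than just around $y$---and to run an iteration on scales. Both structural hypotheses on $X$ are essential here: doubling, for scaling perimeters of balls via \eqref{eq:variational capacity basic estimate}, and the $(1,1)$-Poincar\'e inequality, through the isoperimetric profile \eqref{eq:isop inequality with zero boundary values}.
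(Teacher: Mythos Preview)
The paper does not give its own proof of this theorem: it is quoted verbatim from \cite[Theorem~3.16]{L-WC} and used as a black box. So there is no in-paper argument to compare against; any assessment has to be of your outline on its own merits (or against what one expects the proof in \cite{L-WC} to look like).

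Your two-step plan is the standard one for least-gradient-type superminimizers, and Step~1 (one-sided perimeter minimality of superlevel sets via coarea) is essentially correct, modulo routine care with supports and with the set $F_0$ having finite perimeter. The genuine gap is Step~2, and you already flag it: you do not actually prove the density-propagation statement, you only describe what it would need to accomplish. The missing ingredient is a quantitative density lemma of De~Giorgi type for one-sided perimeter minimizers: there is a threshold $\gamma=\gamma(C_d,C_P,\lambda)>0$ such that if $P(E,\cdot)\le P(E\cup F,\cdot)$ for all compactly supported $F$ and $\mu(B(x_0,r)\setminus E)/\mu(B(x_0,r))<\gamma$, then in fact $\mu(B(x_0,r/2)\setminus E)=0$. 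This is proved by comparing $E$ with $E\cup B(x_0,s)$ over a sequence of radii $s\in(r/2,r)$, using the relative isoperimetric inequality to bound $P(E,B(x_0,s))$ from below by a power of $\mu(B(x_0,s)\setminus E)$, and iterating; it is exactly the ``iteration across scales'' you allude to, and it is where doubling and the Poincar\'e inequality enter decisively. Once you have that lemma, openness of $\{u^{\wedge}>t\}$ is immediate: pick $r$ with $\mu(B(x_0,r)\setminus E_\tau)/\mu(B(x_0,r))<\gamma$, conclude $E_\tau$ has full measure in $B(x_0,r/2)$, hence $u^{\wedge}\ge\tau>t$ there. As written, your proposal stops just short of this lemma, so it is an outline rather than a proof.
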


As mentioned in the introduction, in \cite{L-WC} we proved
a weak Cartan property for $p=1$, more precisely in the following form.

\begin{theorem}[{\cite[Theorem 5.3]{L-WC}}]\label{thm:weak Cartan property in text}
	Let $A\subset X$ and let $x\in X\setminus A$ be such that $A$
	is $1$-thin at $x$.
	Then there exist $R>0$ and $E_0,E_1\subset X$ such that $\ch_{E_0},\ch_{E_1}\in\BV(X)$,
	$\ch_{E_0}$ and $\ch_{E_1}$ are $1$-superminimizers in $B(x,R)$,
	$\max\{\ch_{E_0}^{\wedge},\ch_{E_1}^{\wedge}\}=1$ in $A\cap B(x,R)$,
	$\ch_{E_0}^{\vee}(x)=0=\ch_{E_1}^{\vee}(x)$,
	$\{\max\{\ch_{E_0}^{\vee},\ch_{E_1}^{\vee}\}>0\}$ is $1$-thin at $x$,
	and
	\[
	\lim_{r\to 0}r\frac{P(E_0,B(x,r))}{\mu(B(x,r))}=0,\qquad
	\lim_{r\to 0}r\frac{P(E_1,B(x,r))}{\mu(B(x,r))}=0.
	\]
\end{theorem}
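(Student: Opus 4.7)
The plan is to construct $E_0$ and $E_1$ as disjoint unions, over alternating scales shrinking to $x$, of obstacle-problem minimizers that cover $A$ near $x$ while having vanishing density at $x$ itself. Splitting into two sets is what allows the scale-by-scale construction to work: same-parity scales can be kept well separated even when consecutive-parity ones overlap, which is what produces a clean local $1$-superminimizer structure.

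Using $1$-thinness of $A$ at $x$, Lemma~\ref{lem:capacity wrt different balls}, and doubling, pick radii $r_0>r_1>\cdots\downarrow 0$ with $r_{k+1}\le r_k/16$ and
\[
r_k\,\rcapa_1\bigl(A\cap B(x,r_k),\,B(x,2r_k)\bigr)\le 2^{-k}\mu(B(x,r_k)).
\]
From the definition of $\rcapa_1$ together with the coarea formula, obtain a $\mu$-measurable $G_k$ with $A\cap B(x,r_k)\subset G_k\subset B(x,2r_k)$ and $P(G_k,X)\lesssim 2^{-k}\mu(B(x,r_k))/r_k$. Let $F_k$ minimize $P(F,X)$ among $\mu$-measurable $F$ with $G_k\cap(B(x,r_k)\setminus B(x,r_{k+1}))\subset F\subset B(x,2r_k)\setminus B(x,r_{k+1}/2)$. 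A minimizer exists by $\BV$-compactness, $P(F_k,X)\le P(G_k,X)$ by comparison with $G_k$, and variational testing against nonnegative admissible $\varphi\in\BV_c$ shows that $\ch_{F_k}$ is a $1$-superminimizer in the open annulus $B(x,2r_k)\setminus\overline{B(x,r_{k+1}/2)}$. Finally, set $E_i:=\bigcup_{k\equiv i\bmod 2}F_k$ for $i=0,1$ and $R:=r_0$.

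For any $y\in B(x,R)\setminus\{x\}$, only one same-parity $F_k$ meets a sufficiently small neighborhood of $y$: same-parity annuli are separated because the outer radius $2r_{k+2}$ of $F_{k+2}$ is at most $r_{k+1}/8$, which is strictly below the inner radius $r_{k+1}/2$ of $F_k$. Hence each $\ch_{E_i}$ is locally a $1$-superminimizer on $B(x,R)\setminus\{x\}$, and since $\{x\}$ has zero $1$-capacity by \eqref{eq:null sets of Hausdorff measure and capacity}, the property extends across $x$. The bound $P(E_i,B(x,r))\le\sum_{r_k<2r}P(F_k,X)$ together with doubling delivers the two quantitative perimeter limits at $x$; feeding this into the isoperimetric inequality \eqref{eq:isop inequality with zero boundary values} yields $\mu(E_i\cap B(x,r))/\mu(B(x,r))\to 0$, so $\ch_{E_i}^\vee(x)=0$. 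The set $\{\max\{\ch_{E_0}^\vee,\ch_{E_1}^\vee\}>0\}$ lies inside $E_0\cup E_1$ up to a $\mu$-null piece, and the same perimeter estimates give $1$-thinness at $x$. For the covering claim, every $y\in A\cap B(x,R)\setminus\{x\}$ lies in some annulus $B(x,r_k)\setminus B(x,r_{k+1})$ and in $G_k$; Theorem~\ref{thm:superminimizers are lsc} applied to $\ch_{F_k}^\wedge$, combined with the fact that the obstacle forces $F_k$ to occupy full measure at $y$, ensures $\ch_{F_k}^\wedge(y)=1$.

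The main obstacle is twofold. First, turning $F_k$ into a genuine $1$-superminimizer (rather than merely a set of small perimeter) requires obstacle-problem existence theory in the metric $\BV$ setting and a careful variational argument upgrading local minimality to superminimality against nonnegative perturbations in $\BV_c$. Second, and more delicate, is bridging the gap between the measure-theoretic containment $A\subset G_k$ produced by coarea, which may fail on an $\mathcal{H}$-null set, and the pointwise statement $\max\{\ch_{E_0}^\wedge,\ch_{E_1}^\wedge\}=1$ on all of $A\cap B(x,R)$: one must either select quasicontinuous representatives of the capacitary potentials so that the superlevel inclusion becomes pointwise, or absorb the $\mathcal{H}$-null loss using \eqref{eq:null sets of Hausdorff measure and capacity} together with fine-topological lower semicontinuity arguments for $\ch_{F_k}^\wedge$.
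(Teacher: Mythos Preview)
Your overall architecture---solve obstacle-type perimeter minimization problems in alternating annuli and assemble $E_0,E_1$ by parity---matches the construction in \cite{L-WC}, from which this theorem is quoted (the present paper does not reprove it but records structural facts from that proof in \eqref{eq:E0 and E1 contain...}--\eqref{eq:perimeter estimate for E1}). However, there is a concrete error in your extension step: you assert that $\{x\}$ has zero $1$-capacity ``by \eqref{eq:null sets of Hausdorff measure and capacity},'' but that equation is merely the equivalence $\capa_1(A)=0\Leftrightarrow\mathcal H(A)=0$, and singletons need not satisfy $\mathcal H(\{x\})=0$ in a general doubling space (Theorem~\ref{thm:strict subsets with Newtonian functions} explicitly treats $\capa_1(\{x\})=0$ as an additional hypothesis, not an automatic fact). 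So your removability argument for superminimality at $x$ fails in general, and even when $\capa_1(\{x\})=0$ you would still owe a removability theorem for $1$-superminimizers, which the paper does not supply. The proof in \cite{L-WC} obtains superminimality in the full ball $B(x,R)$ directly from the variational construction, with no removal step.

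The two obstacles you flag at the end are genuine and are exactly where the original proof does its work. The pointwise-covering issue in particular is resolved in \cite{L-WC} by first replacing $A$ with an \emph{open} set $W\supset A$ that is still $1$-thin at $x$ (see the paragraph following Theorem~\ref{thm:weak Cartan property in text}); the obstacle constraint is then imposed with $W$, so the inclusions $W\cap H_j\subset E_i$ in \eqref{eq:E0 and E1 contain...} are honest set-theoretic inclusions of open sets, and lower semicontinuity of $\ch_{E_i}^{\wedge}$ (Theorem~\ref{thm:superminimizers are lsc}) yields $\ch_{E_i}^{\wedge}=1$ on $W\cap H_j\supset A\cap H_j$ with no exceptional set. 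Passing to such an open $W$ at the outset is the main structural difference between your sketch and the actual proof, and it dissolves your second obstacle entirely.
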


Now we collect a few facts that are not included in the above statement,
but follow from the proof given in \cite{L-WC}.
Defining $B_j:=B(x,2^{-j}R)$
and $H_j:=B_j\setminus \tfrac{9}{10}\overline{B_{j+1}}$ for $j=0,1,\ldots$,
there exists an open set $W\supset A$ that is $1$-thin at $x$,
\begin{equation}\label{eq:E0 and E1 contain...}
W\cap \bigcup_{j=0,2,\ldots}H_j\subset E_0\quad\textrm{and}\quad
W\cap \bigcup_{j=1,3,\ldots}H_j\subset E_1,
\end{equation}
and
\begin{equation}\label{eq:E0 and E1 included in annuli}
\begin{split}
E_0\subset \left(\tfrac 32 B_{0}\setminus 
\tfrac 45 B_{1}\right)\cup \bigcup_{j=2,4,\ldots}^{\infty}\tfrac 54 B_{j}\setminus 
\tfrac 45 B_{j+1}\quad\textrm{and}\\
E_1\subset \left(\tfrac 32 B_{1}\setminus 
\tfrac 45 B_{2}\right)\cup 
\bigcup_{j=3,5,\ldots}^{\infty}\tfrac 54 B_{j}\setminus 
\tfrac 45 B_{j+1}.
\end{split}
\end{equation}
Moreover, by \cite[Eq (5.6)]{L-WC}, for all $i=2,4,6,\ldots$ we have
\begin{equation}\label{eq:perimeter estimate for E0}
P(E_0\cap \tfrac 54 B_i,X) \le 5C_S \rcapa_1(W\cap B_i,2B_i),
\end{equation}
and similarly for all $i=3,5,7,\ldots$,
\begin{equation}\label{eq:perimeter estimate for E1}
P(E_1\cap \tfrac 54 B_i,X) \le 5C_S \rcapa_1(W\cap B_i,2B_i).
\end{equation}
From the proof it can also be seen that if $R>0$ is chosen to be smaller, all of the above results still hold. The same will then apply to the conclusion of
the next lemma.
Let $B_j$ and $H_j$ be defined as above.

\begin{lemma}\label{lem:sets Fj in Cartan property}
	Let $A\subset X$ and let $x\in X\setminus A$ be such that $A$
	is $1$-thin at $x$. Then there exists a number $R>0$,
	an open set
	$W\supset A$ that is $1$-thin at $x$, and
	open sets $F_j\supset W\cap H_j$ such that 
	$F_j\subset \tfrac 54 B_j\setminus \tfrac 34 B_{j+1}$
	for all $j=0,1,\ldots$, and
	\begin{equation}\label{eq:perimeter sum of Fj}
	\sum_{j=i}^{\infty}P(F_j,X)\le 50C_S^2 \rcapa_1(W\cap B_i,2B_i)
	\end{equation}
	for all $i=0,1,\ldots$.
\end{lemma}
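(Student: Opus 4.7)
The plan is to invoke Theorem~\ref{thm:weak Cartan property in text} and the additional structural information recorded in \eqref{eq:E0 and E1 contain...}--\eqref{eq:perimeter estimate for E1}. I first apply that theorem to produce $R_0, E_0, E_1, W$, and then shrink to $R := R_0/4$, using the remark preceding the lemma: all conclusions of Theorem~\ref{thm:weak Cartan property in text} together with \eqref{eq:E0 and E1 contain...}--\eqref{eq:perimeter estimate for E1} remain valid after shrinkage. In the resulting new indexing of the $B_j$, the perimeter estimates \eqref{eq:perimeter estimate for E0} and \eqref{eq:perimeter estimate for E1} become effective already at the smallest indices $j=0$ and $j=1$ respectively.

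By Theorem~\ref{thm:superminimizers are lsc}, the upper approximate limits $\ch_{E_0}^{\wedge}$ and $\ch_{E_1}^{\wedge}$ are lower semicontinuous on a ball containing $\tfrac{5}{4} B_0$. Since each takes only the values $0$ and $1$, the superlevel sets $\{\ch_{E_k}^{\wedge} > 1/2\}$ $(k=0,1)$ are open and $\mu$-equivalent to $E_k$ (by the Lebesgue density theorem in doubling metric measure spaces). I would define
\[
F_j := \bigl\{\ch_{E_{j \bmod 2}}^{\wedge} > \tfrac{1}{2}\bigr\} \cap \bigl(\tfrac{5}{4} B_j \setminus \tfrac{4}{5} \overline{B_{j+1}}\bigr),
\]
which is open, is contained in $\tfrac{5}{4} B_j \setminus \tfrac{3}{4} B_{j+1}$ (as $\tfrac{3}{4} < \tfrac{4}{5}$), and contains $W \cap H_j$: any $y \in W \cap H_j$ has a small ball neighborhood inside $W \cap H_j \subset E_{j \bmod 2}$ by \eqref{eq:E0 and E1 contain...}, hence is a density-one point of $E_{j \bmod 2}$ so $\ch_{E_{j \bmod 2}}^{\wedge}(y)=1$; and $H_j \subset B_j \setminus \tfrac{4}{5} \overline{B_{j+1}}$ since $\tfrac{4}{5} < \tfrac{9}{10}$.

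The perimeter sum \eqref{eq:perimeter sum of Fj} is the heart of the argument. After a harmless perturbation of the fraction $\tfrac{4}{5}$ so that the associated spheres have $\mu$-measure zero, each $F_j$ becomes $\mu$-equivalent to the annular piece $E_{j \bmod 2} \cap (\tfrac{5}{4} B_j \setminus \tfrac{4}{5} B_{j+1})$, whence $P(F_j,X)$ equals the perimeter of that piece. Same-parity pieces lie in pairwise disjoint annuli separated by a positive distance, so the BV measures $\|D\ch_{F_j}\|$ for fixed parity have pairwise disjoint supports. Identifying the union of same-parity pieces with $E_k \cap \tfrac{5}{4} B_i$ via \eqref{eq:E0 and E1 included in annuli} (with $k \in \{0,1\}$ matching the parity of $i$) and invoking \eqref{eq:perimeter estimate for E0} or \eqref{eq:perimeter estimate for E1}, one obtains
\[
\sum_{\substack{j \ge i \\ j \equiv i \!\!\pmod 2}} P(F_j,X) = P\bigl(E_k \cap \tfrac{5}{4} B_i, X\bigr) \le 5 C_S \rcapa_1(W \cap B_i, 2 B_i).
\]
The opposite-parity contribution to $\sum_{j \ge i} P(F_j,X)$ starts at $i+1$; Lemma~\ref{lem:capacity wrt different balls} (applied with $s=2$, $t=4$) combined with monotonicity of $\rcapa_1$ in its first argument yields $\rcapa_1(W \cap B_{i+1}, 2 B_{i+1}) \le 5 C_S \rcapa_1(W \cap B_i, 2 B_i)$, so the total sum is at most $(5 C_S + 25 C_S^2)\rcapa_1(W \cap B_i, 2 B_i) \le 50 C_S^2 \rcapa_1(W \cap B_i, 2 B_i)$, as required.

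The main obstacle is the decomposition of the perimeter measure across separated annuli, which I would justify via the standard BV fact that $\|D\ch_{\bigcup A_j}\| = \sum \|D\ch_{A_j}\|$ when the measurable sets $A_j$ are pairwise at positive distance, combined with the null-set perturbation above to make $F_j$ coincide with the required annular pieces up to $\mu$-equivalence.
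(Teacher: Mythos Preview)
Your proof is correct and follows essentially the same route as the paper: apply the weak Cartan property, define $F_j$ as the intersection of a lower-semicontinuous superlevel set of $\ch_{E_{j\bmod 2}}^{\wedge}$ with an annulus, use Lebesgue differentiation to identify the pieces with annular slices of $E_k$, and sum perimeters via the positive separation and \eqref{eq:perimeter estimate for E0}--\eqref{eq:perimeter estimate for E1} together with Lemma~\ref{lem:capacity wrt different balls}. The only cosmetic difference is your inner radius $\tfrac{4}{5}\overline{B_{j+1}}$ in place of the paper's $\tfrac{3}{4}\overline{B_{j+1}}$: the paper's strictly smaller choice automatically avoids the sphere-measure issue you handle by perturbation, and the paper postpones the shrinkage $R\to R/4$ to the end rather than the beginning, but neither affects the argument.
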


\begin{proof}
	By using the weak Cartan property (Theorem \ref{thm:weak Cartan property in text}),
	choose $R>0$ and $E_0,E_1\subset X$ such that $\ch_{E_0},\ch_{E_1}\in\BV(X)$
	and $\ch_{E_0}$ and $\ch_{E_1}$ are $1$-super\-minimizers in $B(x,R)$.
	We can assume that $R<\tfrac 12\diam X$.
	Also let $W\supset A$ be an open set that is $1$-thin at $x$, as described above. Define
	\[
	F_j:=\{\ch_{E_0}^{\wedge}>0\}\cap \tfrac 54 B_j\setminus \tfrac 34 \overline{B_{j+1}}
	\quad\textrm{for } j=2,4,\ldots,
	\]
	and
	\[
	F_j:=\{\ch_{E_1}^{\wedge}>0\}\cap \tfrac 54 B_j\setminus \tfrac 34 \overline{B_{j+1}}
	\quad\textrm{for } j=3,5,\ldots.
	\]
	By \eqref{eq:E0 and E1 contain...}, we have $F_j\supset W\cap H_j$
	for all $j=2,3,\ldots$ as desired.
	The sets $F_j$ are open by Theorem \ref{thm:superminimizers are lsc}.
	By Lebesgue's differentiation theorem, the sets $\{\ch_{E_0}^{\wedge}>0\}$
	and $\{\ch_{E_1}^{\wedge}>0\}$ differ from $E_0$ and $E_1$, respectively, only by a
	set of $\mu$-measure zero.
	Thus by \eqref{eq:E0 and E1 included in annuli} and the fact that the sets $F_j$ are
	at a positive distance from each other, we find that for all $i=2,4,\ldots$,
	\[
	P(E_0\cap \tfrac 54 B_i,X)=P\left(\bigcup_{j=i,i+2,\ldots}F_j,X\right)
	=\sum_{j=i,i+2,\ldots}P(F_j,X),
	\]
	and similarly for all $i=3,5,\ldots$,
	\[
	P(E_1\cap \tfrac 54 B_i,X)=\sum_{j=i,i+2,\ldots}P(F_j,X).
	\]
	Combining these with \eqref{eq:perimeter estimate for E0} and 
	\eqref{eq:perimeter estimate for E1}, and using
	Lemma \ref{lem:capacity wrt different balls}, we have for all
	$i=2,3,\ldots$
	\begin{align*}
	\sum_{j=i}^{\infty}P(F_j,X)
	&\le 5C_S (\rcapa_1(W\cap B_i,2B_i)+\rcapa_1(W\cap B_{i+1},2B_{i+1}))\\
	&\le 5C_S (\rcapa_1(W\cap B_i,2B_i)+5C_S\rcapa_1(W\cap B_{i+1},4B_{i+1}))\\
	&\le 25C_S^2 (\rcapa_1(W\cap B_i,2B_i)+\rcapa_1(W\cap B_{i},4B_{i+1}))\\
	&= 50C_S^2 \rcapa_1(W\cap B_i,2B_i).
	\end{align*}
	Then by replacing $R$ with $R/4$, we have the result.
\end{proof}

Recall the constant $\lambda \ge 1$ from the Poincar\'e inequality
\eqref{eq:poincare inequality}.
We have the following boxing inequality from \cite[Theorem 3.1]{KKST2}.
Note that in \cite{KKST2} it is assumed that $\mu(X)=\infty$, but the proof reveals that
we can alternatively assume $\mu(F)<\mu(X)/2$.
\begin{theorem}\label{thm:boxing inequality}
Let $F\subset X$ be an open set of finite perimeter with $\mu(F)<\mu(X)/2$ (in particular,
$\mu(F)$ is finite). Then there exists a collection of balls
	$\{B_k=B(x_k,r_k)\}_{k\in\N}$
	such that the balls $\lambda B_k$ are disjoint,
	$F\subset \bigcup_{k=1}^{\infty}5\lambda B_k$,
	\[
	\frac{1}{2C_d}\le \frac{\mu(B_k\cap F)}{\mu(B_k)}
	\le \frac{1}{2}
	\]
	for all $k\in\N$, and
	\[
	\sum_{k=1}^{\infty}\frac{\mu(5\lambda B_k)}{5\lambda r_k}\le C_B P(F,X)
	\]
	for some constant $C_B=C_B(C_d,C_P,\lambda)$.
\end{theorem}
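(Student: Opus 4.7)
The plan is to follow the classical boxing strategy: construct test balls adapted to the density of $F$, extract a disjoint subfamily by the $5B$-covering lemma, and apply the relative isoperimetric inequality arising from the $(1,1)$-Poincar\'e inequality.

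First, for each $x\in F$ I would choose a radius $r_x>0$ at which the density of $F$ in $B(x,r_x)$ lies between $1/(2C_d)$ and $1/2$. Because $F$ is open, the density equals $1$ on small balls at $x$; because $\mu(F)<\mu(X)/2$, the density drops below $1/2$ on sufficiently large balls. Taking $r_x$ to be (morally) the infimum of radii for which $\mu(B(x,r)\cap F)\le\mu(B(x,r))/2$, the doubling property prevents a sudden collapse of the density across the critical scale, yielding the sandwich $\tfrac{1}{2C_d}\le\mu(B(x,r_x)\cap F)/\mu(B(x,r_x))\le\tfrac12$.

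Next I would apply the standard $5B$-covering lemma to $\{\lambda B(x,r_x):x\in F\}$. The radii $r_x$ are bounded above (from the upper density bound $\mu(B(x,r_x)\cap F)\le \mu(B(x,r_x))/2$ combined with $\mu(F)<\mu(X)/2$ and \eqref{eq:homogeneous dimension} measured against a fixed basepoint), so the lemma produces a countable disjoint subfamily $\{\lambda B_k\}$ with $F\subset\bigcup_k 5\lambda B_k$, and the density sandwich is inherited on each $B_k$. Finally, to control $\sum_k \mu(5\lambda B_k)/(5\lambda r_k)$ I would invoke the relative isoperimetric inequality, which is derived from the $(1,1)$-Poincar\'e inequality applied to the Lipschitz approximations of $\ch_F$ used in the definition of $\|D\ch_F\|$: since both $\mu(B_k\cap F)$ and $\mu(B_k\setminus F)$ are bounded below by a fixed multiple of $\mu(B_k)$, one obtains $\mu(B_k)\le C\, r_k\, P(F,\lambda B_k)$. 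Summing and using pairwise disjointness of the $\lambda B_k$ collapses the right-hand side to $P(F,X)$, and converting $\mu(B_k)$ to $\mu(5\lambda B_k)$ by doubling yields the claim with $C_B=C_B(C_d,C_P,\lambda)$.

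The main obstacle I expect is the transition argument at $r_x$: the map $r\mapsto\mu(B(x,r)\cap F)/\mu(B(x,r))$ need not be continuous, so establishing the lower density $1/(2C_d)$ at the critical scale requires a careful doubling estimate comparing $\mu(B(x,r_x))$ to $\mu(B(x,r))$ for $r$ slightly exceeding $r_x$. The hypothesis $\mu(F)<\mu(X)/2$ is exactly what allows this upper-transition to occur at all; without it, the very first step of the construction fails. Once the density sandwich is in hand, the covering lemma and relative isoperimetric inequality are standard tools in complete doubling spaces supporting a Poincar\'e inequality, and the remaining summation is routine bookkeeping.
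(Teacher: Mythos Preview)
The paper does not prove this theorem; it is quoted from \cite[Theorem~3.1]{KKST2}, with only the added remark that the hypothesis $\mu(X)=\infty$ there may be replaced by $\mu(F)<\mu(X)/2$. Your sketch is precisely the classical boxing argument used in that reference---the density-transition choice of $r_x$, the $5B$-covering lemma applied to the dilated balls, and the relative isoperimetric inequality summed over the disjoint family---and your identification of the doubling-based lower density bound at the critical scale as the one delicate point is accurate.
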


Now we can show the following Cartan-type property.

\begin{theorem}\label{thm:Cartan Sobolev version}
	Let $A\subset X$ and let $x\in X\setminus A$ be such that $A$
	is $1$-thin at $x$. Then there exists a number
	$R>0$, open sets $G\subset V\subset X$,
	and a function $\eta\in N_0^{1,1}(V)$
	such that $A\cap B(x,R)\subset G$, $V$ is $1$-thin at $x$,
	$0\le\eta\le 1$ on $X$, $\eta=1$ on $G$, and
	\begin{equation}\label{eq:estimate for Cartan function}
	\lim_{r\to 0}\frac{r}{\mu(B(x,r))}\Vert \eta\Vert_{N^{1,1}(B(x,r))}=0.
	\end{equation}
\end{theorem}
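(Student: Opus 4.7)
I would begin by invoking Lemma~\ref{lem:sets Fj in Cartan property} to obtain a radius $R>0$, an open set $W\supset A$ that is $1$-thin at $x$, and open sets $F_j\supset W\cap H_j$ lying in $\tfrac54 B_j\setminus\tfrac34 B_{j+1}$, satisfying $\sum_{j\ge i}P(F_j,X)\le 50C_S^2\,\rcapa_1(W\cap B_i,2B_i)$. After shrinking $R$ so that $\mu(F_j)<\mu(X)/2$ for each $j$, I apply the boxing inequality (Theorem~\ref{thm:boxing inequality}) separately to each $F_j$ to obtain balls $\{B_{j,k}=B(x_{j,k},r_{j,k})\}_k$ with $F_j\subset\bigcup_k 5\lambda B_{j,k}$, the $\lambda B_{j,k}$ disjoint at each scale, and $\sum_k\mu(5\lambda B_{j,k})/(5\lambda r_{j,k})\le C_B P(F_j,X)$. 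The density bound $\mu(B_{j,k}\cap F_j)\ge \mu(B_{j,k})/(2C_d)$, together with $F_j\subset\tfrac54 B_j$ and the doubling estimate \eqref{eq:homogeneous dimension}, forces the size bound $r_{j,k}\le C_0\cdot 2^{-j}R$ for some $C_0=C_0(C_d)$.

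\textbf{Construction of $\eta$.} Using the standard Lipschitz cutoffs $\varphi_{j,k}:=\max\{0,1-\dist(\cdot,5\lambda B_{j,k})/(5\lambda r_{j,k})\}$, which equal $1$ on $5\lambda B_{j,k}$, vanish outside $10\lambda B_{j,k}$, and admit $g_{j,k}=(5\lambda r_{j,k})^{-1}\ch_{10\lambda B_{j,k}}$ as an upper gradient, I set
\[
G:=\bigcup_{j,k}5\lambda B_{j,k},\qquad V:=\bigcup_{j,k}10\lambda B_{j,k},\qquad \eta:=\sup_{j,k}\varphi_{j,k}.
\]
Then $G\subset V$ are open, $0\le\eta\le 1$, $\eta=1$ on $G$, and $\eta=0$ outside $V$. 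Because $x\notin A$ and the annuli $H_j$ cover $B(x,R)\setminus\{x\}$, every $y\in A\cap B(x,R)$ lies in some $W\cap H_j\subset F_j\subset G$, so $A\cap B(x,R)\subset G$. A standard argument (for $1$-weak upper gradients of pointwise suprema of countably many Newton--Sobolev functions) shows that $\sum_{j,k}g_{j,k}$ is a $1$-weak upper gradient of $\eta$, with total $L^1$-mass bounded by $C(C_d)\sum_j P(F_j,X)\le C\,\rcapa_1(W\cap B_0,2B_0)<\infty$; hence $\eta\in N_0^{1,1}(V)$.

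\textbf{Decay estimate and thinness of $V$.} For \eqref{eq:estimate for Cartan function} the $L^1$-part is immediate since $\eta\le 1$ gives $r\|\eta\|_{L^1(B(x,r))}/\mu(B(x,r))\le r\to 0$. For the upper-gradient part, the size bound yields $10\lambda B_{j,k}\subset B(x,\rho_j)$ with $\rho_j:=(5/4+(1+10\lambda)C_0)\,2^{-j}R$. Letting $j_0=j_0(r)$ be the smallest index with $\rho_{j_0}\le r$, the contribution from scales $j\ge j_0$ is confined to $B(x,r)$ and bounded by $C\sum_{j\ge j_0}P(F_j,X)\le C\,\rcapa_1(W\cap B_{j_0},2B_{j_0})$; combined with $\mu(B_{j_0})\sim\mu(B(x,r))$ and the $1$-thinness of $W$ at $x$, this delivers the required decay. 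For $j<j_0$, the constraint $10\lambda B_{j,k}\cap B(x,r)\ne\emptyset$ combined with the annular position of $F_j$ forces $r_{j,k}$ to be comparable to $2^{-j}R$; a packing argument then bounds the number of contributing balls per scale by $O(1)$, and the boxing perimeter estimate caps the total boundary contribution by $\sum_{j\ge j_0-M}P(F_j,X)\lesssim\rcapa_1(W\cap B_{j_0-M},2B_{j_0-M})$ for a fixed $M=M(C_d,\lambda)$, again decaying at the right rate. To check that $V$ itself is $1$-thin at $x$, I construct an analogous test function for $\rcapa_1(V\cap B(x,r),B(x,2r))$ from cutoffs equal to $1$ on each $10\lambda B_{j,k}$ and supported in $20\lambda B_{j,k}$, multiplied by a standard cutoff of $B(x,2r)$, and repeat the same scale analysis.

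\textbf{Main obstacle.} The delicate technical point is controlling the contribution from the scales just below $j_0(r)$, where the bumps $10\lambda B_{j,k}$ may cross $\partial B(x,r)$. Keeping this boundary contribution within $\rcapa_1(W\cap B_{j_0-M},2B_{j_0-M})$ for a fixed $M$ requires the interplay of the size estimate $r_{j,k}\le C_0\cdot 2^{-j}R$, the forced lower bound $r_{j,k}\gtrsim 2^{-j}R/\lambda$ on contributing balls, and the packing count $O(1)$ per scale; assembling these pieces with constants that propagate cleanly through the boxing inequality is the central challenge of the argument.
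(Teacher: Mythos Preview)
Your overall architecture (Lemma~\ref{lem:sets Fj in Cartan property} $\to$ boxing inequality on each $F_j$ $\to$ supremum of Lipschitz bumps) matches the paper's, but there is a genuine gap precisely at the step you flag as the main obstacle.

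First, a smaller issue: the size bound $r_{j,k}\le C_0\,2^{-j}R$ does \emph{not} follow from the density bound, the containment $F_j\subset\tfrac54 B_j$, and doubling alone. From $\mu(B_{j,k})\le 2C_d\,\mu(F_j)\le 2C_d\,\mu(\tfrac54 B_j)$ and doubling you only get $\mu(B_{j,k})\lesssim\mu(B_j)$, which is no contradiction even when $r_{j,k}\gg 2^{-j}R$ (since then $B_j\subset 3B_{j,k}$ and doubling gives the reverse inequality as well). To force a radius bound you need \emph{quantitative smallness} of $\mu(F_j)/\mu(B_j)$, and that comes from the isoperimetric inequality \eqref{eq:isop inequality with zero boundary values}, the perimeter estimate \eqref{eq:perimeter sum of Fj}, and---crucially---the thinness hypothesis itself. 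The paper makes this explicit by first shrinking $R$ so that
\[
2^{-j}R\,\frac{\rcapa_1(W\cap B_j,2B_j)}{\mu(B_j)}<\delta
\]
for a carefully chosen $\delta=\delta(C_d,C_S,\lambda,Q)$, and from this obtains the \emph{sharp} bound $r_{j,k}\le 2^{-j}R/(170\lambda)$.

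Second, and more seriously: even granting a weak bound $r_{j,k}\le C_0\,2^{-j}R$, your boundary-crossing analysis does not close. The claim that only scales $j\ge j_0-M$ contribute is unjustified: for every $j<j_0$ there can be a boxing ball with $r_{j,k}\approx C_0\,2^{-j}R$ whose $10\lambda$-dilation contains $B(x,r)$. For such a ball $\int_{B(x,r)}g_{j,k}\le\mu(B(x,r))/(5\lambda r_{j,k})\approx\mu(B(x,r))\,2^j/R$; summing the geometric series over $j=0,\dots,j_0-1$ (even with $O(1)$ balls per scale, which your packing argument does give) yields a total of order $\mu(B(x,r))\,2^{j_0}/R$, hence $r$ times this divided by $\mu(B(x,r))$ is of order $1$---bounded, but not decaying to $0$. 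The same obstruction blocks your proposed verification that $V$ is $1$-thin at $x$.

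The paper's remedy is to avoid the boundary-crossing regime altogether: with $\delta$ chosen so that $r_{j,k}\le 2^{-j}R/(170\lambda)$, one gets $20\lambda B_{j,k}\subset B_{j-1}\setminus B_{j+2}$, so the dilated boxing balls at scale $j$ never meet $B_i$ unless $j\ge i-1$. The estimate over $B_i$ then involves only $\sum_{j\ge i-1}P(F_j,X)\lesssim\rcapa_1(W\cap B_{i-1},2B_{i-1})$, and the $1$-thinness of $W$ delivers both \eqref{eq:estimate for Cartan function} and the $1$-thinness of $V$ directly, with no packing or scale-counting needed. The missing idea in your argument is precisely this additional shrinking of $R$ via the thinness hypothesis, making the boxing radii small relative to $1/\lambda$ times their dyadic scale rather than merely comparable to it.
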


\begin{proof}
	Take $R>0$, an open set $W\supset A$, and open sets
	$F_j\subset \tfrac 54 B_j\setminus \tfrac 34 B_{j+1}$ as given by
	Lemma \ref{lem:sets Fj in Cartan property}.
	Let
	\[
	\delta:=\frac{1}{2^8 (680 \lambda)^Q C_d^{3} C_S^3},
	\]
	where $Q>1$ is the exponent in \eqref{eq:homogeneous dimension}. 
	We can assume that $R\le \min\left\{1,\tfrac 18\diam X\right\}$.
	Since $\mu(\{x\})=0$ (see \cite[Corollary 3.9]{BB}),
	we can also assume $R$ to be so small that
	$\mu(\tfrac 54 B_0)<\tfrac 12 \mu(X)$, and so also $\mu(F_j)<\tfrac 12 \mu(X)$ for all
	$j=0,1,\ldots$.
	Since $W$ is $1$-thin at $x$, we can further
	assume that $R$ is so small that
	\begin{equation}\label{eq:W is thin}
	2^{-j}R\frac{\rcapa_1(W\cap B_j,2B_j)}{\mu(B_j)}<\delta
	\end{equation}
	for all $j=0,1,\ldots$. Fix $j$.
	By the boxing inequality
	(Theorem \ref{thm:boxing inequality})
	we find a collection of balls $\{B^j_k=B(x_k^j,r_k^j)\}_{k=1}^{\infty}$
	such that the balls $\lambda B^j_k$ are disjoint,
	$F_j\subset \bigcup_{k=1}^{\infty}5\lambda B_k^j$,
	\begin{equation}\label{eq:Bkj and Fj measure}
	\frac{1}{2C_d}\le \frac{\mu(B_k^j\cap F_j)}{\mu(B_k^j)}
	\le \frac{1}{2}
	\end{equation}
	for all $k\in\N$, and
	\begin{equation}\label{eq:boxing inequality estimate}
	\sum_{k=1}^{\infty}\frac{\mu(5\lambda B_k^j)}{5\lambda r_k^j}\le C_B P(F_j,X).
	\end{equation}
	Thus we have
	\begin{align*}
	\mu(B_k^j) \le 2C_d\mu(B_k^j\cap F_j)
	&\le 2C_d\mu(F_j)\\
	&\le 2^{2-j} R C_d C_S P(F_j,X)\quad\textrm{by }\eqref{eq:isop inequality with zero boundary values}\\
	&\le 2^{8-j} R C_d C_S^3 \rcapa_1(W\cap B_j,2B_j)\quad\textrm{by }\eqref{eq:perimeter sum of Fj}.
	\end{align*}
	Thus for all $k\in\N$,
	\begin{equation}\label{eq:measure ratio of Bkj and Bj}
	\frac{\mu(B_k^j)}{\mu(B_j)}\le 2^{8} C_d C_S^3 2^{-j}R
	\frac{\rcapa_1(W\cap B_j,2B_j)}{\mu(B_j)}
	\le 2^{8} C_d C_S^3\delta
	\end{equation}
	by \eqref{eq:W is thin}. By \eqref{eq:Bkj and Fj measure} we necessarily have
	$F_j\cap B_k^j\neq \emptyset$ for all $k\in\N$,
	and so $\tfrac 54 B_j\cap B_k^j\neq \emptyset$.
	Now if $r_k^j\ge 2^{-j}R$ for some $k\in\N$, then $B_j\subset 4 B_k^j$ and so
	\[
	\frac{\mu(B_k^j)}{\mu(B_j)}\ge
	\frac{1}{C_d^{2}},
	\]
	contradicting \eqref{eq:measure ratio of Bkj and Bj} by our choice of $\delta$.
	Thus $r_k^j\le 2^{-j}R$ for all $k\in\N$, so that
	$x_k^j\in 3 B_j$, and thus by \eqref{eq:homogeneous dimension},
	\[
	\left(\frac{r_k^j}{2^{-j+2}R}\right)^{Q}
	\le C_d^2 \frac{\mu(B_k^j)}{\mu(4 B_j)}
	\le C_d^{2} \frac{\mu(B_k^j)}{\mu(B_j)}
	\le 2^{8} C_d^{3} C_S^3 \delta
	\]
	by \eqref{eq:measure ratio of Bkj and Bj}, so that by our choice of $\delta$,
	\begin{equation}\label{eq:estimate for rkj}
	r_k^j\le (2^{8} C_d^3 C_S^3 \delta)^{1/Q} 2^{-j+2}R=\frac {2^{-j}R}{170 \lambda}.
	\end{equation}
	Thus recalling that $F_j\cap B_k^j\neq \emptyset$,
	so that $(\tfrac 54 B_j\setminus \tfrac 34 B_{j+1})\cap B_k^j \neq \emptyset$,
	we conclude that $20\lambda B_k^j\subset B_{j-1}\setminus B_{j+2}$
	(let $B_{-1}:=B(x,2R)$).
	Now, define Lipschitz functions
	\[
	\xi_k^j:=\max\left\{0,1-\frac{\dist(\cdot,10\lambda B_k^j)}{10\lambda r_k^j}\right\},
	\quad k\in\N,
	\]
	so that $\xi_k^j=1$ on $10\lambda B_k^j$ and $\xi_k^j=0$ on
	$X\setminus 20\lambda B_k^j$. Using the basic properties of $1$-weak upper gradients, see \cite[Corollary 2.21]{BB}, we obtain
	\[
	\int_X g_{\xi_k^j}\,d\mu\le \frac{\mu(20\lambda B_k^j)}{10\lambda r_k^j}.
	\]
	Define $V:=\bigcup_{j=0}^{\infty}\bigcup_{k=1}^{\infty}10\lambda B_k^j$.
	Now for every $i=1,2,\ldots$,
	\begin{equation}\label{eq:estimating capacity of V cap Bi}
	\begin{split}
	\rcapa_1(V\cap B_i,4B_i)
	&\le \rcapa_1\left(\bigcup_{j=i-1}^{\infty}\bigcup_{k=1}^{\infty}10\lambda B_k^j,4B_i\right)\\
	&\le \sum_{j=i-1}^{\infty}\sum_{k=1}^{\infty}
	\rcapa_1\left(10\lambda B_k^j,4B_i\right)\\
	&\le \sum_{j=i-1}^{\infty}\sum_{k=1}^{\infty}\int_X g_{\xi_k^j}\,d\mu\\
	&\le \sum_{j=i-1}^{\infty}\sum_{k=1}^{\infty} \frac{\mu(20\lambda B_k^j)}{10\lambda r_k^j}\\
	&\le C_d^2 C_B \sum_{j=i-1}^{\infty} P(F_j,X)\quad\textrm{by }
	\eqref{eq:boxing inequality estimate}\\
	&\le 50 C_d^2 C_B C_S^2 \rcapa_1(W\cap B_{i-1},2B_{i-1})\quad\textrm{by }
	\eqref{eq:perimeter sum of Fj}.
	\end{split}
	\end{equation}
	Thus
	\[
	2^{-i}R\frac{\rcapa_1(V\cap B_i,4B_i)}{\mu(B_i)}\le 
	50 C_d^3 C_B C_S^2 2^{-i+1}R\frac{\rcapa_1(W\cap B_{i-1},2B_{i-1})}{\mu(B_{i-1})}\to 0
	\]
	as $i\to\infty$, since $W$ is $1$-thin at $x$.
	By Lemma \ref{lem:capacity wrt different balls}
	it is then straightforward to show that $V$ is also $1$-thin at $x$.
	Let us also define the Lipschitz functions
	\[
	\eta_k^j:=\max\left\{0,1-\frac{\dist(\cdot,5\lambda B_k^j)}{5\lambda r_k^j}\right\},
	\quad j=0,1,\ldots,\ k=1,2,\ldots,
	\]
	so that $\eta_k^j=1$ on $5\lambda B_k^j$
	and $\eta_k^j=0$ on $X\setminus 10\lambda B_k^j$, and then let
	\[
	\eta:=\sup_{j=0,1,\ldots,\ k=1,2,\ldots}\eta_k^j.
	\]
	Recall from Lemma \ref{lem:sets Fj in Cartan property} that
	$\bigcup_{j=0}^{\infty} F_j\supset W\cap B(x,R)$;
	thus $\eta\ge 1$ on
	\[
	G:=\bigcup_{j=0}^{\infty}\bigcup_{k=1}^{\infty}5\lambda B_k^j 
	\supset \bigcup_{j=0}^{\infty} F_j\supset W\cap B(x,R)
	\supset A\cap B(x,R).
	\]
	By \cite[Lemma 1.52]{BB} we know that
	$g_{\eta}\le \sum_{j=0}^{\infty}\sum_{k=1}^{\infty}g_{\eta_k^j}$.
	Thus for any $i=1,2,\ldots$,
	\begin{align*}
	\int_{B_i} g_{\eta}\,d\mu
	\le \sum_{j=0}^{\infty}\sum_{k=1}^{\infty}\int_{B_i} g_{\eta_k^j}\,d\mu
	&\le \sum_{j=i-1}^{\infty}\sum_{k=1}^{\infty}\int_{X} g_{\eta_k^j}\,d\mu\\
	&\le 50 C_d C_B C_S^2\rcapa_1(W\cap B_{i-1},2B_{i-1}),
	\end{align*}
	where the last inequality follows just as in the last
	four lines of \eqref{eq:estimating capacity of V cap Bi}.
	Since we assumed $R\le 1$ and so $5\lambda r_k^j\le 1$
	by \eqref{eq:estimate for rkj}, we similarly get
	\[
	\Vert \eta\Vert_{L^1(B_i)}\le 
	50 C_d C_B C_S^2\rcapa_1(W\cap B_{i-1},2B_{i-1}).
	\]
	Using the fact that $W$ is $1$-thin at $x$ and the doubling property of $\mu$, we get
	\eqref{eq:estimate for Cartan function}.
	Estimating just as in the last
	four lines of \eqref{eq:estimating capacity of V cap Bi}, now with $i=1$,
	we get
	\[
	\int_{X} g_{\eta}\,d\mu
	\le \sum_{j=0}^{\infty}\sum_{k=1}^{\infty}\int_{X} g_{\eta_k^j}\,d\mu
	\le 50 C_d C_B C_S^2\rcapa_1(W\cap B_{0},2B_{0})<\infty.
	\]
	Thus $\eta\in N^{1,1}(X)$. Clearly $\eta=0$ on $X\setminus V$, and so $\eta\in N_0^{1,1}(V)$.
	
\end{proof}

\section{$1$-strict subsets}

In this section we study $1$-strict subsets which are defined as follows.

\begin{definition}
A set $A\subset D$ is a \emph{1-strict subset} of $D\subset X$
if there is a function $u\in N_0^{1,1}(D)$ such that $u=1$ on $A$.
\end{definition}

Equivalently, $A$ is a $1$-strict subset of $D$ if $\rcapa_1(A,D)<\infty$.
In \cite[Proposition 6.7]{L-CK} we proved the following result by using the
weak Cartan property (Theorem \ref{thm:weak Cartan property in text}).

\begin{proposition}\label{prop:strict subsets}
	Let $U\subset X$ be $1$-finely open and let $x\in U$. Then there exists
	a $1$-finely open set $W$ such that $x\in W\subset U$,
	and a function $w\in\BV(X)$ such that $0\le w\le 1$ on $X$,
	$w^{\wedge}=1$ on $W$, and $\supp w\Subset U$.
\end{proposition}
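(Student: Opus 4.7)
The plan is to apply the weak Cartan property (Theorem \ref{thm:weak Cartan property in text}) to the set $A := X \setminus U$. Since $U$ is $1$-finely open and $x \in U$, $A$ is $1$-thin at $x$ with $x \notin A$, so the property furnishes $R > 0$ and sets $E_0, E_1$ of finite perimeter, with $\ch_{E_0}$ and $\ch_{E_1}$ being $1$-superminimizers in $B(x, R)$, such that $v := \ch_{E_0 \cup E_1} \in \BV(X)$ satisfies $v^\wedge = \max\{\ch_{E_0}^\wedge, \ch_{E_1}^\wedge\} = 1$ on $A \cap B(x, R)$, $v^\vee(x) = 0$, and the set $S := \{v^\vee > 0\}$ is $1$-thin at $x$.

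Next I would choose a Lipschitz cutoff $\phi\colon X \to [0, 1]$ with $\phi \equiv 1$ on $B(x, R/4)$ and $\phi \equiv 0$ outside $B(x, R/2)$, and set $w := \phi (1 - v) \in \BV(X)$, so that $0 \le w \le 1$. The product rules $(\phi v)^\wedge = \phi v^\wedge$ and $(\phi v)^\vee = \phi v^\vee$ for continuous $\phi \ge 0$, together with $(-f)^\wedge = -f^\vee$ and the fact that adding a continuous summand commutes with $\wedge$ and $\vee$, yield
\[
w^\wedge = \phi(1 - v^\vee), \qquad w^\vee = \phi(1 - v^\wedge).
\]

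To build $W$, I use that since $S$ is $1$-thin at $x$ and $x \notin S$, the $1$-fine closure $\overline{S}^1$ does not contain $x$ (a standard consequence of the fact that the $1$-fine topology is a topology; equivalently, $\overline{S}^1 = S \cup b_1 S$ and $x$ lies in neither summand). Hence
\[
W := \bigl(X \setminus \overline{S}^1\bigr) \cap B(x, R/4)
\]
is $1$-finely open (intersection of a $1$-finely open set with an open set), contains $x$, and satisfies $W \subset \{v^\vee = 0\} \cap B(x, R/4)$. Since $v^\vee \ge v^\wedge = 1$ on $(X \setminus U) \cap B(x, R)$, we have $\{v^\vee = 0\} \cap B(x, R) \subset U$, so $W \subset U$; moreover, on $W$ both $\phi = 1$ and $v^\vee = 0$, hence $w^\wedge = 1$ on $W$.

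The main obstacle is verifying $\supp w \Subset U$. For this I would invoke Theorem \ref{thm:superminimizers are lsc} to note that $\ch_{E_0}^\wedge$ and $\ch_{E_1}^\wedge$ are lower semicontinuous on $B(x, R)$; being $\{0, 1\}$-valued, so is $v^\wedge$, and $\{v^\wedge = 0\}$ is therefore relatively closed in $B(x, R)$. Combined with the inclusion $\{w^\vee > 0\} = \{\phi > 0\} \cap \{v^\wedge < 1\} \subset B(x, R/2) \cap \{v^\wedge = 0\}$ and the fact that $\overline{B(x, R/2)} \subset B(x, R)$, this forces $\supp w = \overline{\{w^\vee > 0\}}$ to lie in $\{v^\wedge = 0\} \cap \overline{B(x, R/2)} \subset U$. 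Properness of $X$ makes this closure compact, so $\supp w \Subset U$.
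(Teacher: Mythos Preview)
The paper does not reproduce a proof of this proposition; it cites \cite[Proposition~6.7]{L-CK} and merely records that the argument there uses the weak Cartan property (Theorem~\ref{thm:weak Cartan property in text}). Your strategy---apply Theorem~\ref{thm:weak Cartan property in text} to $A=X\setminus U$, set $v=\ch_{E_0\cup E_1}$, multiply $1-v$ by a Lipschitz cutoff, and take $W$ to be a small ball minus the $1$-fine closure of $\{v^{\vee}>0\}$---is exactly the intended one, and it parallels the paper's proof of the stronger Theorem~\ref{thm:strict subsets with Newtonian functions} (where the Newton--Sobolev $\eta$ plays the role of your $v$ and the open set $G$ replaces the superlevel set of $v^{\wedge}$).

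There is, however, one genuine gap in the step establishing $\supp w\Subset U$. You assert that $v^{\wedge}$ is lower semicontinuous on $B(x,R)$, apparently because $\ch_{E_0}^{\wedge}$ and $\ch_{E_1}^{\wedge}$ are. This would follow if $v^{\wedge}=\max\{\ch_{E_0}^{\wedge},\ch_{E_1}^{\wedge}\}$, but in general only the inequality $v^{\wedge}\ge\max\{\ch_{E_0}^{\wedge},\ch_{E_1}^{\wedge}\}$ holds: the union $E_0\cup E_1$ can have density~$1$ at a point where neither $E_0$ nor $E_1$ does. Since $\ch_{E_0\cup E_1}$ is not known to be a $1$-superminimizer, Theorem~\ref{thm:superminimizers are lsc} does not apply to $v$ either.

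The fix is immediate: work with $M:=\max\{\ch_{E_0}^{\wedge},\ch_{E_1}^{\wedge}\}$ instead of $v^{\wedge}$ in the closure argument. The function $M$ \emph{is} lower semicontinuous on $B(x,R)$ (max of lsc functions), so $\{M=0\}$ is relatively closed there; moreover $\{v^{\wedge}=0\}\subset\{M=0\}$ and $\{M=0\}\cap B(x,R)\subset U$ since $M=1$ on $A\cap B(x,R)$. Hence
\[
\{w^{\vee}>0\}\subset B(x,R/2)\cap\{v^{\wedge}=0\}\subset B(x,R/2)\cap\{M=0\},
\]
and taking closures (using $\overline{B(x,R/2)}\subset B(x,R)$) gives $\supp w\subset \overline{B(x,R/2)}\cap\{M=0\}\subset U$, which is compact by properness. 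With this adjustment your proof is complete.
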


This kind of formulation
is sufficient for some purposes, but now we are able to improve it by
replacing $w\in\BV(X)$ with $w\in N^{1,1}(X)$.
The following is our main result on the existence of $1$-strict subsets.

\begin{theorem}\label{thm:strict subsets with Newtonian functions}
	Let $U\subset X$ be $1$-finely open and let $x\in U$. Then there exists
	a $1$-finely open set $W$ such that $x\in W\subset U$,
	and a function $w\in N_0^{1,1}(U)$ such that $0\le w\le 1$ on $X$,
	$w=1$ on $W$, and $\supp w\Subset U$.
	
	Moreover, if $\capa_1(\{x\})=0$, then $\Vert w\Vert_{N^{1,1}(X)}$
	can be made arbitrarily
	small.
\end{theorem}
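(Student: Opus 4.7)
The plan is to combine the new Cartan-type property (Theorem \ref{thm:Cartan Sobolev version}) with a Lipschitz cutoff, in the spirit of the corresponding argument for $1<p<\infty$ in \cite{BBL-SS}. First I would apply Theorem \ref{thm:Cartan Sobolev version} to $A := X \setminus U$, which is $1$-thin at $x$ because $U$ is $1$-finely open. This produces $R > 0$, open sets $G \subset V$, and a function $\eta \in N_0^{1,1}(V)$ with $\eta = 1$ on $G \supset A \cap B(x,R)$, $0 \le \eta \le 1$, and $V$ $1$-thin at $x$. A direct inspection of the construction in the proof of Theorem \ref{thm:Cartan Sobolev version} reveals that $V$ is a union of balls lying in annular regions around $x$ (arising from the boxing inequality applied to the sets $F_j$), so $x \notin V$ and in particular $\eta(x) = 0$.

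I would then take a Lipschitz cutoff $\phi$ with $0 \le \phi \le 1$, $\phi \equiv 1$ on $\overline{B(x,R/4)}$, $\phi \equiv 0$ outside $B(x,R/2)$, and Lipschitz constant $4/R$, and define $w := \phi(1-\eta)$. Then $0 \le w \le 1$, and the Leibniz rule for upper gradients yields $w \in N^{1,1}(X)$ with $g_w \le g_\phi + g_\eta$. For the support, one has $\{w \ne 0\} \subset B(x,R/2) \cap (X \setminus G)$; since $X \setminus G$ is closed and $\overline{B(x,R/2)} \subset B(x,R)$, taking closures gives $\supp w \subset \overline{B(x,R/2)} \cap (X \setminus G) \subset \overline{B(x,R/2)} \cap U$ (using $G \supset A \cap B(x,R)$), which is compact in $U$ by properness of $X$. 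Hence $w \in N_0^{1,1}(U)$ and $\supp w \Subset U$.

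To produce the $1$-finely open set $W$, I would observe that $\{w < 1\} = \{\phi < 1\} \cup \{\eta > 0\}$. The first piece is disjoint from $B(x,R/4)$ and does not affect thinness at $x$; the second is contained in $V$. Consequently $\rcapa_1(\{w < 1\} \cap B(x,r), B(x,2r)) \le \rcapa_1(V \cap B(x,r), B(x,2r))$ for small $r$, so the thinness of $V$ at $x$ passes to $\{w < 1\}$. Since $\eta(x) = 0$ forces $w(x) = 1$, the point $x$ lies in $W := \fint\{w = 1\}$; this $W$ is $1$-finely open, contained in $\{w = 1\} \subset U$ (because $w \equiv 0$ on $X \setminus U$), and $w \equiv 1$ on $W$.

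For the moreover part, if $\capa_1(\{x\}) = 0$ then \eqref{eq:null sets of Hausdorff measure and capacity} gives $\mathcal{H}(\{x\}) = 0$, so $\mu(B(x,r))/r \to 0$ along some sequence $r \to 0$. Choosing $R$ along such a sequence, which is permissible since the Cartan-type construction is valid for all sufficiently small $R$ with a fixed $1$-thin open neighborhood of $A$, the cutoff satisfies $\|\phi\|_{N^{1,1}(X)} \le C\mu(B(x,R))/R$, while the estimates derived in the proof of Theorem \ref{thm:Cartan Sobolev version} yield $\|\eta\|_{N^{1,1}(X)} = o(\mu(B(x,R))/R)$; combining through $g_w \le g_\phi + g_\eta$ and $\|w\|_{L^1(X)} \le \mu(B(x,R/2))$ renders $\|w\|_{N^{1,1}(X)}$ arbitrarily small. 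I expect the main obstacle to be the subtle point that $x \notin V$ is not part of the statement of Theorem \ref{thm:Cartan Sobolev version} but must be extracted from its proof, in order to ensure $w(x) = 1$ and thereby $x \in \fint \{w = 1\}$.
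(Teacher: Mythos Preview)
Your proof is correct and follows essentially the same route as the paper: apply Theorem~\ref{thm:Cartan Sobolev version} to $A=X\setminus U$, multiply $1-\eta$ by a Lipschitz cutoff, and read off $W$ from the thinness of $V$ at $x$. The one notable difference is that you extract $x\notin V$ (and, for the ``moreover'' part, the smallness of $\Vert\eta\Vert_{N^{1,1}}$ at scale $R$) by inspecting the construction inside the proof of Theorem~\ref{thm:Cartan Sobolev version}, whereas the paper uses only the \emph{statement} of that theorem. Specifically, the paper observes that since $V$ is open one has $V\subset b_1V$, and since $V$ is $1$-thin at $x$ one has $x\notin b_1V$; combined with $\overline{V}^1=V\cup b_1V$ this gives $x\notin\overline{V}^1$, and then $W:=B(x,r/2)\setminus\overline{V}^1$ is the desired $1$-finely open set. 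For the norm estimate the paper keeps $R$ and $\eta$ fixed and instead introduces a second scale $r\le R$ for the cutoff, chosen via the limit~\eqref{eq:estimate for Cartan function} so that $r\Vert\eta\Vert_{N^{1,1}(B(x,r))}/\mu(B(x,r))\le 1$; this yields $\Vert w\Vert_{N^{1,1}(X)}\le(5/r+1)\mu(B(x,r))$ directly. Both arguments work, but the paper's version is self-contained relative to the stated Cartan-type property and avoids the ``main obstacle'' you flagged.
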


\begin{proof}
Applying Theorem \ref{thm:Cartan Sobolev version} with the choice
$A=X\setminus U$,
we find a number $R>0$, open sets $G\subset V\subset X$,
and a function $\eta\in N_0^{1,1}(V)$
such that $B(x,R)\subset G\cup U$, $V$ is $1$-thin at $x$,
$0\le \eta\le 1$ on $X$, $\eta=1$ on $G$, and
\[
\lim_{r\to 0}\frac{r}{\mu(B(x,r))}\Vert \eta\Vert_{N^{1,1}(B(x,r))}=0.
\]
Choose $0<r\le R$ such that $r\Vert \eta\Vert_{N^{1,1}(B(x,r))}/\mu(B(x,r))\le 1$ and let
\[
\rho:=\max\left\{0,1-\frac{4\dist(\cdot,B(x,r/2))}{r}\right\}\in \Lip(X),
\]
so that $0\le \rho\le 1$ on $X$, $\rho=1$ on $B(x,r/2)$,
and $\supp \rho\Subset B(x,r)$.
Then let $w:=(1-\eta) \rho$.
By the Leibniz rule (see \cite[Theorem 2.15]{BB}), we have $w\in N^{1,1}(X)$ and
\begin{align*}
\int_X g_{w}\,d\mu
=\int_{B(x,r)} g_{w}\,d\mu
&\le \int_{B(x,r)} g_{\eta}\,d\mu+\int_{B(x,r)} g_{\rho}\,d\mu\\
&\le \frac{\mu(B(x,r))}{r}+\frac{4\mu(B(x,r))}{r}.
\end{align*}
Thus $\Vert w\Vert_{N^{1,1}(X)}\le (5/r+1)\mu(B(x,r))$.
If $\capa_1(\{x\})=0$, then also $\mathcal H(\{x\})=0$
by \eqref{eq:null sets of Hausdorff measure and capacity},
and so we can make $\mu(B(x,r))/r$ as small as we like by choosing suitable $r$.
Then we can also make $\Vert w\Vert_{N^{1,1}(X)}$ arbitrarily small.

Regardless of the value of $\capa_1(\{x\})$,
the set $V$ is $1$-thin at $x$, that is, $x\notin b_1 V$.
Since $V$ is open we have $V\subset b_1 V$; recall
\eqref{eq:variational capacity basic estimate} and the comment after it.
We know that $\overline{V}^1=V\cup b_1 V$ by
\cite[Corollary 3.5]{L-Fed}, so in conclusion $x\notin \overline{V}^1$.
Thus
\[
W:=B(x,r/2)\setminus \overline{V}^1\subset \{w=1\}
\]
is a $1$-finely open neighborhood of $x$. Finally, $\supp w$ is compact and
\[
\supp w\subset \supp \rho\setminus G\subset (U\cup G)\setminus G \subset U,
\]
so that $\supp w\Subset U$. Clearly now $w\in N_0^{1,1}(U)$.
\end{proof}

Let us make a few more observations concerning $1$-strict subsets.
In general it is not clear which subsets $A$ of a set $D$ are $1$-strict subsets.
If $A$ is a compact subset of an open set $D$, we obviously have
$\rcapa_{1}(A,D)<\infty$, and the test function can even be chosen to
be Lipschitz. When $A$ is a compact subset of a $1$-\emph{quasiopen}
set $D$, we cannot necessarily choose a Lipschitz test function
but one might nonetheless suspect that $\rcapa_1(A,D)<\infty$.
However, this is not always the case.

\begin{example}\label{ex:infinite capacity of compact set wrt quasiopen set}
	Let $X=\R^2$ (unweighted), denote the origin by $0$, and let
	\[
	A:=\bigcup_{j=1}^{\infty}A_j\cup \{0\}\quad\textrm{with}\quad
	A_j:=\{2^{-j}\}\times [-1/(2j),1/(2j)].
	\]
	Denoting $A_j^{\eps}:=\{x\in X:\,\dist(x,A_j)<\eps\}$, with $\eps>0$, let
	\[
	D:=\bigcup_{j=1}^{\infty}D_j\cup \{0\}\quad\textrm{with}\quad D_j:=A_j^{2^{-3j}}.
	\]
	Since all the sets $D_j$ are disjoint, it is straightforward to check that
	\[
	\rcapa_{1}(A,D)=\sum_{j=1}^{\infty}\rcapa_{1}(A_j,D_j)
	=\sum_{j=1}^{\infty}\frac 1j=\infty.
	\]
	Now $A$ is clearly a compact set, and $D$ is $1$-quasiopen since
	$D\cup B(0,r)$ is an open set for every $r>0$.
\end{example}

One can also make the sets $A,D$ connected by adding the line $(0,1/2]\times\{0\}$
to $A$, and by adding e.g. the sets $(2^{-j-1},2^{-j})\times (-2^{-j-1},2^{-j-1})$
to $D$; then we still have $\rcapa_{1}(A,D)=\infty$ but the calculation is somewhat
more complicated.

The variational $1$-capacity is an outer capacity in the following weak sense.

\begin{proposition}\label{prop:quasiouter capacity}
Let $A\subset D\subset X$. Then
\[
\rcapa_1(A,D)=\inf_{\substack{V\textrm{ $1$-quasiopen} \\A\subset
V\subset D}}\rcapa_{1}(V,D).
\]
\end{proposition}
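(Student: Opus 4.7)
The plan is to prove the two inequalities separately. The direction
\[
\rcapa_1(A,D)\le \inf_{\substack{V\text{ $1$-quasiopen} \\ A\subset V\subset D}}\rcapa_1(V,D)
\]
is immediate from the monotonicity of the variational $1$-capacity, since any admissible test function for $\rcapa_1(V,D)$ is also admissible for $\rcapa_1(A,D)$.

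For the reverse direction, assume $\rcapa_1(A,D)<\infty$ (otherwise there is nothing to prove) and fix $\eps>0$. First I would pick a near-optimal test function: by the definition of $\rcapa_1(A,D)$, together with the truncation remark, there exists $u\in N_0^{1,1}(D)$ with $0\le u\le 1$ on $X$, $u\ge 1$ on $A$, and $\int_X g_u\,d\mu\le \rcapa_1(A,D)+\eps/2$. Extend $u$ by zero outside $D$ so that $u\in N^{1,1}(X)$.

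The key step is to use the quasicontinuity of $u$ to produce a quasiopen superlevel set. Fix a small $\delta\in(0,1)$ (to be chosen later) and set $V:=\{u>1-\delta\}$. By Theorem \ref{thm:quasicontinuity}, $u$ is $1$-quasicontinuous on $X$; for any $\eta>0$ there is an open $G$ with $\capa_1(G)<\eta$ such that $u|_{X\setminus G}$ is continuous, and then $\{u>1-\delta\}\setminus G$ is relatively open in $X\setminus G$, so $V\cup G$ is open. Hence $V$ is $1$-quasiopen. Moreover $A\subset V$ because $u\ge 1$ on $A$, and $V\subset D$ because $u=0$ on $X\setminus D$ and $1-\delta>0$.

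To estimate $\rcapa_1(V,D)$, define $v:=\min\{u/(1-\delta),1\}$. Then $v\in N_0^{1,1}(D)$ (since $v$ vanishes where $u$ does), $v=1$ on $V$, and a standard chain-rule/truncation computation (e.g.\ via \cite[Theorem 2.15]{BB} together with the fact that the minimal weak upper gradient is unaffected by truncation at a constant) gives $g_v\le g_u/(1-\delta)$ a.e. Consequently
\[
\rcapa_1(V,D)\le \int_X g_v\,d\mu \le \frac{1}{1-\delta}\int_X g_u\,d\mu \le \frac{\rcapa_1(A,D)+\eps/2}{1-\delta}.
\]
Choosing $\delta$ sufficiently small makes the right-hand side at most $\rcapa_1(A,D)+\eps$. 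Taking the infimum over admissible $V$ and then letting $\eps\to 0$ yields the remaining inequality. I do not foresee a genuine obstacle here; the only point requiring a little care is the verification that superlevel sets of a $1$-quasicontinuous function are $1$-quasiopen, which is just a rearrangement of the definitions once quasicontinuity of $u$ on all of $X$ is noted.
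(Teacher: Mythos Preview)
Your proof is correct and follows essentially the same approach as the paper: pick a near-optimal test function $u\in N_0^{1,1}(D)$, use quasicontinuity (Theorem~\ref{thm:quasicontinuity}) to see that the superlevel set $V=\{u>1-\delta\}$ is $1$-quasiopen with $A\subset V\subset D$, and use the rescaled function $u/(1-\delta)$ as a test function for $\rcapa_1(V,D)$. The only differences are cosmetic---you carry separate parameters $\eps$ and $\delta$ and explicitly truncate to form $v$, whereas the paper uses a single parameter and the untruncated $u/(1-\eps)$---and you spell out the easy inequality and the verification that $V$ is $1$-quasiopen, which the paper leaves implicit.
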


\begin{proof}
We can assume that $\rcapa_1(A,D)<\infty$. Fix $0<\eps<1$. Take $u\in N_0^{1,1}(D)$
such that $u=1$ on $A$ and $\int_X g_u\,d\mu<\rcapa_1(A,D)+\eps$.
The set $V:=\{u>1-\eps\}$ is $1$-quasiopen by Theorem \ref{thm:quasicontinuity}, and
\[
\rcapa_1(V,D) \le \int_X g_{u/(1-\eps)}\,d\mu
= \frac{\int_X g_u\,d\mu}{1-\eps}
\le \frac{\rcapa_1(A,D)+\eps}{1-\eps}.
\]
Since $0<\eps<1$ was arbitrary, we have the result.
\end{proof}

Even though $1$-quasiopen sets and $1$-finely open sets are very closely
related (recall Theorem \ref{thm:quasiopen and finely open}), it is not
clear whether the following holds.

\begin{openproblem}
If $D\subset X$ and $A\subset \fint D$, do we have
\[
\rcapa_1(A,D)=\inf_{\substack{V\textrm{ $1$-finely open} \\A\subset
		V\subset D}}\rcapa_{1}(V,D)?
\]
\end{openproblem}

Note that according to Theorem \ref{thm:strict subsets with Newtonian functions},
the above property does hold in the very special case when $A$ is a point
with $1$-capacity zero.

Let us say that a set $K\subset X$ is \emph{1-quasiclosed} if $X\setminus K$ is $1$-quasiopen. Now we can show that $1$-strict subsets have the following
continuity.

\begin{proposition}
Let $D\subset X$ and let $K_1\supset K_2\supset \ldots$ be bounded
$1$-quasiclosed subsets
of $D$ such that $\rcapa_1(K_1,D)<\infty$. Then for
$K:=\bigcap_{i=1}^{\infty}K_i$ we have
\[
\rcapa_1(K,D)=\lim_{i\to\infty}\rcapa_1(K_i,D).
\]
\end{proposition}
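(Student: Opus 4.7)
The plan is to prove $\rcapa_1(K,D)=\lim_{i\to\infty}\rcapa_1(K_i,D)$ by establishing both inequalities. The direction $\le$ is immediate from the monotonicity of $\rcapa_1$ since $K \subset K_i$ for every $i$. For the reverse, given $\eps > 0$, I will construct for every sufficiently large $i$ an admissible function $v_i \in N_0^{1,1}(D)$ for $K_i$ whose gradient integral is within $o(1)$ (as $\eps \to 0$) of $\rcapa_1(K,D)$. The function $v_i$ will be built as a maximum $\max(\tilde u,\psi)$, where $\tilde u$ handles a ``bulk'' piece $K_i \setminus N$ and $\psi$ handles a small ``bad'' piece $K_i \cap N$, with $N$ an open set of small $1$-capacity.

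For the bulk piece I would begin with a near-optimal $u \in N_0^{1,1}(D)$ for $K$ with $0\le u\le 1$, $u=1$ on $K$, and $\int_X g_u\,d\mu<\rcapa_1(K,D)+\eps$, invoke Theorem \ref{thm:quasicontinuity} to obtain an open $H$ with $\capa_1(H)<\eps$ off which $u$ is continuous, and use the $1$-quasiclosedness of each $K_i$ to pick open $G_i$ with $\capa_1(G_i)<2^{-i}\eps$ such that $K_i \setminus G_i$ is closed. With $N:=H\cup\bigcup_i G_i$ (open, $\capa_1(N)<2\eps$), the sets $K_i \setminus N=(K_i\setminus G_i)\cap(X\setminus N)$ are closed subsets of the bounded set $K_1$, hence compact as $X$ is proper; they decrease in $i$ to $K\setminus N$, on which $u$ is continuous and identically $1$. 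A standard finite-intersection argument then forces $u>1-\delta$ on $K_i \setminus N$ for all large $i$, so $\tilde u:=\min(u/(1-\delta),1)\in N_0^{1,1}(D)$ equals $1$ on $K_i \setminus N$ and satisfies $\int_X g_{\tilde u}\,d\mu\le (1-\delta)^{-1}(\rcapa_1(K,D)+\eps)$.

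The main obstacle will be the bad piece: I need a function in $N_0^{1,1}(D)$ that equals $1$ on $K_i\cap N$ and whose gradient integral tends to $0$ with $\eps$. The difficulty is that $N$ need not be compactly contained in $D$, so a capacitary function for $N$ on $X$ need not vanish outside $D$. My plan is to first sharpen the capacitary $\phi$ of $N$ (with $\phi\ge 1$ on $N$, $0\le\phi\le 1$, $\Vert\phi\Vert_{N^{1,1}(X)}<2\eps$) to $\phi':=\max(2\phi-1,0)$, which still equals $1$ on $N$, satisfies $\int_X g_{\phi'}\,d\mu\le 4\eps$, and is supported in $\{\phi>1/2\}$, a set of $\mu$-measure at most $4\eps$ by Chebyshev's inequality. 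Then, fixing a test function $u_1\in N_0^{1,1}(D)$ for $K_1$ (which exists since $\rcapa_1(K_1,D)<\infty$), I set $\psi:=\min(\phi',u_1)\in N_0^{1,1}(D)$; this equals $1$ on $K_1\cap N\supset K_i\cap N$, and the formula for the minimal upper gradient of a minimum yields
\[
\int_X g_\psi\,d\mu\le \int_X g_{\phi'}\,d\mu+\int_{\{\phi'>0\}}g_{u_1}\,d\mu\le 4\eps+\omega(4\eps),
\]
where $\omega(t)\to 0$ as $t\to 0^+$ by the absolute continuity of the finite measure $g_{u_1}\,d\mu$.

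Putting the pieces together, $v_i:=\max(\tilde u,\psi)\in N_0^{1,1}(D)$ is admissible for $K_i$ for all large $i$, and
\[
\rcapa_1(K_i,D)\le (1-\delta)^{-1}(\rcapa_1(K,D)+\eps)+4\eps+\omega(4\eps).
\]
Passing to $\limsup_i$, then letting $\eps\to 0$, and finally $\delta\to 0$, completes the proof.
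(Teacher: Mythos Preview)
Your proof is correct and follows the same overall architecture as the paper's: split $K_i$ into a ``bulk'' piece covered by a near-optimal test function for $K$ and a ``bad'' piece of small $1$-capacity, use compactness (after making the $K_i$ compact modulo a small open set) to handle the bulk for large $i$, and use the assumption $\rcapa_1(K_1,D)<\infty$ to localize a capacitary function for the bad set into $N_0^{1,1}(D)$.

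The tactical choices differ slightly. The paper first invokes Proposition~\ref{prop:quasiouter capacity} to obtain a $1$-quasiopen $V\supset K$ with $\rcapa_1(V,D)$ close to $\rcapa_1(K,D)$, whereas you work directly with the near-optimal $u$ and its quasicontinuity (which is exactly how Proposition~\ref{prop:quasiouter capacity} is proved, so you have essentially inlined it). For the bad piece, the paper localizes by a \emph{product} $\rho_j=v w_j$ and controls $\int w_j g_v\,d\mu$ via a subsequence with $w_j\to 0$ a.e.\ plus dominated convergence; you localize by a \emph{minimum} $\psi=\min(\phi',u_1)$ and control $\int_{\{\phi'>0\}}g_{u_1}\,d\mu$ via Chebyshev on $\|\phi\|_{L^1}$ plus absolute continuity of $g_{u_1}\,d\mu$. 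Your route avoids the subsequence extraction and is marginally more direct, but neither approach is deeper than the other.
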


We will show in Example \ref{ex:necessity of finite capacity of compact set}
below that the assumption $\rcapa_1(K_1,D)<\infty$ is needed.

\begin{proof}
Fix $\eps>0$. By Proposition \ref{prop:quasiouter capacity}
we find a $1$-quasiopen set $V$ such that
$K\subset V\subset D$ and $\rcapa_1(V,D)<\rcapa_1(K,D)+\eps$.
For each $j\in\N$ we find an open set $\widetilde{G}_j\subset X$ such that
$V\cup \widetilde{G}_j$ is open and $\capa_1(\widetilde{G}_j)\to 0$ as $j\to\infty$.
For each $i,j\in\N$, we find an open set $G_{i,j}\subset X$ such that
$K_i\setminus G_{i,j}$ is compact and $\capa_1(G_{i,j})<2^{-i-j}$.
Letting $G_j:=\widetilde{G}_j\cup \bigcup_{i=1}^{\infty}G_{i,j}$ for each $j\in\N$, we have that each $V\cup G_j$ is open, each $K_i\setminus G_j$ is 
compact, and $\capa_1(G_j)\to 0$ as $j\to \infty$. Then for each
$j\in\N$ we find a function $w_j\in N^{1,1}(X)$ such that
$0\le w_j\le 1$ on $X$, $w_j=1$ on $G_j$, and
$\Vert w_j\Vert_{N^{1,1}(X)}\to 0$ as $j\to\infty$.
Passing to a subsequence (not relabeled), we can assume that $w_j\to 0$ a.e.

Since $\rcapa_1(K_1,D)<\infty$, we find
$v\in N_0^{1,1}(D)$ such that $0\le v\le 1$ on $X$ and $v=1$ on $K_1$. For each $j\in\N$, let $\rho_j:=vw_j$. Then
$\Vert \rho_j\Vert_{L^1(X)}\to 0$ as $j\to\infty$, and by the Leibniz rule
(see \cite[Theorem 2.15]{BB}),
\[
\int_X g_{\rho_j}\,d\mu\le \int_X g_{w_j}\,d\mu+\int_X w_j g_{v}\,d\mu
\to 0
\]
as $j\to\infty$; for the second term this follows
from Lebesgue's dominated convergence theorem.
Thus $\rcapa_1(G_j\cap K_1,D)\to 0$. Fix $j\in\N$ such that
$\rcapa_1(G_j\cap K_1,D)<\eps$.
Since every $K_i\setminus G_j$ is compact and $V\cup G_j$ is open,
for some $i\in\N$ we have $K_i\setminus G_j\subset V\cup G_j$.
Thus $K_i\subset V\cup G_j$.
Then
\begin{align*}
\rcapa_1(K_i,D)
&\le \rcapa_1(V\cup (G_j\cap K_1),D)\\
&\le \rcapa_1(V,D)+\rcapa_1(G_j\cap K_1,D)\\
&\le \rcapa_1(K,D)+\eps+\rcapa_1(G_j\cap K_1,D)\\
&\le \rcapa_1(K,D)+2\eps.
\end{align*}
Since $\eps>0$ was arbitrary, the proof is concluded.
\end{proof}

\begin{example}\label{ex:necessity of finite capacity of compact set}
	In the notation of Example
	\ref{ex:infinite capacity of compact set wrt quasiopen set},
	let $K_i:=\bigcup_{j=i}^{\infty}A_j\cup \{0\}$ for each $i\in\N$.
	These are compact
	sets and similarly as in Example
	\ref{ex:infinite capacity of compact set wrt quasiopen set}
	we find that $\rcapa_{1}(K_i,D)=\infty$ for every
	$i\in\N$. However, $\rcapa_1(K,D)=0$ for $K:=\bigcap_{i=1}^{\infty}K_i=\{0\}$,
	by the fact that a point has $1$-capacity zero and by using
	\eqref{eq:quasieverywhere equivalence classes}.
\end{example}

\section{Application to fine Sobolev spaces}\label{sec:fine sobolev}

Bj\"orn--Bj\"orn--Latvala \cite{BBL-SS} have studied different definitions of
Newton-Sobolev spaces on quasiopen sets in metric spaces in the case $1<p<\infty$.
As an application of the theory we have developed, we show that the analogous
results hold for $p=1$.

First we prove the following fact in a very similar way as
it is proved in the case $1<p<\infty$, see \cite[Theorem 1.4(b)]{BBL-CCK}
and \cite[Theorem 4.9(b)]{BBL-WC}. Recall that a function $u$ defined on a set  $U\subset X$ is $1$-quasicontinuous on $U$ if for every $\eps>0$ there is an open set $G\subset X$ such that
$\capa_1(G)<\eps$ and $u|_{U\setminus G}$ is continuous (as a real-valued function).

\begin{theorem}\label{thm:fine continuity and quasicontinuity equivalence}
	A function $u$ on a $1$-quasiopen set $U$ is
	$1$-quasicontinuous on $U$ if and only if it is finite $1$-q.e. and $1$-finely
	continuous $1$-q.e. on $U$.
\end{theorem}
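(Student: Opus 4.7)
The plan is to treat the two directions separately, both in close analogy with the corresponding $1<p<\infty$ arguments in \cite{BBL-CCK,BBL-WC}. The forward direction will rest on the identity \eqref{eq:capacity of fine closure}, which converts an open set of small $1$-capacity into a $1$-finely closed set of the same capacity, while the reverse direction will combine $1$-fine continuity at good points with the quasi-Lindel\"of principle and the fact that $1$-finely open sets are $1$-quasiopen (Theorem \ref{thm:quasiopen and finely open}).

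For the forward direction, I would take open $G_k$ with $\capa_1(G_k)<2^{-k}$ and $u|_{U\setminus G_k}$ continuous, and pass to the $1$-fine closures $N_k:=\overline{G_k}^1$. By \eqref{eq:capacity of fine closure} these are $1$-finely closed with $\capa_1(N_k)<2^{-k}$, so $N:=\bigcap_k N_k$ satisfies $\capa_1(N)=0$ by monotonicity. For any $x\in U\setminus N$ there is some $k$ with $x\notin N_k$; since $u|_{U\setminus G_k}$ is real-valued and metric-continuous at $x$, $u(x)$ is finite, and for any $\eps>0$ the $1$-finely open set $B(x,\delta)\setminus N_k$ (with $\delta$ coming from the metric continuity) lies inside $B(x,\delta)\setminus G_k$, so $u$ stays within $\eps$ of $u(x)$ on its intersection with $U$. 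Hence $u$ is $1$-finely continuous at $x$.

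For the backward direction, let $N_0\subset U$ be the exceptional set of $1$-capacity zero. Fix $\eps>0$, and by outer regularity enclose $N_0$ in open $H_0$ with $\capa_1(H_0)<\eps/2$. At each $x\in U\setminus N_0$ and each $n\in\N$, $1$-fine continuity supplies a $1$-finely open set $\widetilde V_x^n\ni x$ with $|u(y)-u(x)|<1/n$ for every $y\in\widetilde V_x^n\cap U$. I would then invoke the quasi-Lindel\"of principle (available for $p=1$ from \cite{L-CK}) to extract, for each $n$, a countable subfamily $\{\widetilde V_{x_i^n}^n\}_i$ covering $U\setminus N_0$ outside a $1$-capacity-zero set $M_n$, which I enclose in open $L_n$ with $\capa_1(L_n)<\eps/2^{n+3}$. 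Since each $\widetilde V_{x_i^n}^n$ is $1$-quasiopen by Theorem \ref{thm:quasiopen and finely open}, I pick open $G_{i,n}$ with $\capa_1(G_{i,n})<\eps/2^{n+i+3}$ such that $\widetilde V_{x_i^n}^n\cup G_{i,n}$ is open, and set $G:=H_0\cup\bigcup_n L_n\cup\bigcup_{n,i}G_{i,n}$. Countable subadditivity gives $\capa_1(G)<\eps$. To verify that $u|_{U\setminus G}$ is metric-continuous at $x\in U\setminus G$, given $\eta>0$ I choose $n$ with $2/n<\eta$ and the index $i$ with $x\in\widetilde V_{x_i^n}^n$ (possible since $x\notin M_n\subset L_n\subset G$); the open set $\widetilde V_{x_i^n}^n\cup G_{i,n}$ contains a ball $B(x,r)$, and any $y\in B(x,r)\cap(U\setminus G)$ avoids $G_{i,n}$ and so lies in $\widetilde V_{x_i^n}^n\cap U$. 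Triangulating through $x_i^n$ yields $|u(y)-u(x)|<2/n<\eta$.

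The main obstacle is the backward direction's reliance on the quasi-Lindel\"of principle: without it, the uncountable family $\{\widetilde V_x^n\}_{x}$ cannot be thinned to a countable collection whose open approximations have summable capacity. Once this is granted, the rest is bookkeeping, since the geometric series $\sum_{n,i}2^{-n-i-3}<1$ keeps $\capa_1(G)$ below $\eps$. The forward direction is by comparison essentially immediate once \eqref{eq:capacity of fine closure} is invoked.
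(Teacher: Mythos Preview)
Your argument is correct in both directions. The direction from $1$-quasicontinuity to $1$-fine continuity matches the paper's proof almost exactly: both pass from the open sets $G_k$ to their $1$-fine closures via \eqref{eq:capacity of fine closure} and use that $B(x,\delta)\setminus\overline{G_k}^1$ (or, in the paper, $V\setminus\overline{G_j}^1$ with $V$ the $1$-finely open part of $U$) is a $1$-finely open neighbourhood on which the restricted continuity upgrades to $1$-fine continuity.

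For the other direction you take a genuinely different route. The paper avoids the quasi-Lindel\"of principle altogether: it enumerates the rational intervals $(a_j,b_j)$ and observes that each $V_j:=\{x\in V:a_j<u(x)<b_j\}$ is $1$-finely open by $1$-fine continuity, hence $1$-quasiopen by Theorem \ref{thm:quasiopen and finely open}. This produces a \emph{countable} family of $1$-quasiopen sets from the outset, so one simply chooses open $G_j$ with $\capa_1(G_j)<2^{-j-1}\eps$ making $V_j\cup G_j$ open, and the continuity of $u|_{U\setminus G}$ is immediate from the fact that the $V_j\cup G$ form an open subbase. Your approach instead builds uncountably many local $1$-fine neighbourhoods $\widetilde V_x^n$ and then invokes the quasi-Lindel\"of principle to thin them; the triangulation through the centres $x_i^n$ is then needed to recover continuity. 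Both work, but the paper's rational-interval trick is lighter: it sidesteps what you identified as the ``main obstacle'' entirely, and uses only Theorem \ref{thm:quasiopen and finely open} rather than the deeper Theorem \ref{thm:quasi-Lindelof}.
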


\begin{proof}
	To prove one direction, suppose there is a set $N\subset U$ such that $\capa_1(N)=0$ and $u$ is finite and $1$-finely
	continuous at every point in $V:=U\setminus N$.
	By Theorem \ref{thm:quasiopen and finely open}, we can assume that $V$ is $1$-finely open.
	Let $\{(a_j,b_j)\}_{j=1}^{\infty}$ be an enumeration of all intervals in $\R$
	with rational endpoints
	and let
	\[
	V_j:=\{x\in V:\,a_j<u(x)<b_j\}.
	\]
	By the $1$-fine continuity of $u$, the sets $V_j$ are $1$-finely open.
	Hence by Theorem \ref{thm:quasiopen and finely open}, they are also $1$-quasiopen.
	Fix $\eps>0$. There are open sets $G_j\subset X$ such that
	$\capa_1(G_j)<2^{-j-1}\eps$ and each $V_j\cup G_j$ is open.
	Since $\capa_1$ is an outer capacity, there is also an open set
	$G_N\supset N$ such that $\capa_1(G_N)<\eps/2$. Now
	\[
	G:=G_N\cup \bigcup_{j=1}^{\infty}G_j
	\]
	is an open set such that $\capa_1(G)<\eps$, and $u|_{U\setminus G}$ is continuous
	since $V_j\cup G$ are open sets.
	
	To prove the converse direction, by Theorem
	\ref{thm:quasiopen and finely open} we know that $U=V\cup N$, where $V$ is
	$1$-finely open and $\mathcal H(N)=0$, and then also
	$\capa_1(N)=0$ by \eqref{eq:null sets of Hausdorff measure and capacity}.
	By the quasicontinuity of $u$, for each
	$j\in\N$ we find an open set $G_j\subset X$ such that $\capa_1(G_j)<1/j$ and
	$u|_{V\setminus G_j}$ is continuous. By \eqref{eq:capacity of fine closure},
	we have $\capa_1(\overline{G_j}^1)=\capa_1(G_j)$ for each $j\in\N$,
	and so the set
	\[
	A:=N\cup \bigcap_{j=1}^{\infty}\overline{G_j}^1
	\]
	satisfies $\capa_1(A)=0$. If $x\in U\setminus A$, then
	$x\in V\setminus \overline{G_j}^1$ for some $j\in\N$.
	Since $V\setminus \overline{G_j}^1$ is a $1$-finely open set and
	$u|_{V\setminus \overline{G_j}^1}$ is continuous, it follows that
	$u$ is finite and $1$-finely continuous at $x$.
\end{proof}

We will need the following quasi-Lindel\"of principle from \cite{L-CK}.
\begin{theorem}[{\cite[Theorem 5.2]{L-CK}}]\label{thm:quasi-Lindelof}
	For every family $\mathcal V$ of $1$-finely open sets there is a countable subfamily
	$\mathcal V'$ such that
	\[
	\capa_1\left(\bigcup_{V\in\mathcal V} V\setminus \bigcup_{V'\in\mathcal V'}V'\right)=0.
	\]
\end{theorem}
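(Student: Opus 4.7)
The plan is to deduce the quasi-Lindel\"of property for the $1$-fine topology from the ordinary Lindel\"of property of the metric topology on $X$, using the relation between $1$-finely open and $1$-quasiopen sets given by Theorem \ref{thm:quasiopen and finely open} together with the characterization \eqref{eq:null sets of Hausdorff measure and capacity} of $\capa_1$-null sets.

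The first step is to observe that, since $X$ is complete and doubling, it is separable and therefore Lindel\"of in its metric topology. By Theorem \ref{thm:quasiopen and finely open} every $V\in\mathcal V$ is $1$-quasiopen, so for each $V$ and each $n\in\N$ we can choose an open set $G_{V,n}\subset X$ with $\capa_1(G_{V,n})<1/n$ such that $V\cup G_{V,n}$ is open. For fixed $n$, $\{V\cup G_{V,n}\}_{V\in\mathcal V}$ is then an open cover of the metric-open set $O_n:=\bigcup_{V\in\mathcal V}(V\cup G_{V,n})$, and Lindel\"of extracts a countable subfamily $\{V_{n,k}\}_{k\in\N}\subset\mathcal V$ with $O_n=\bigcup_k(V_{n,k}\cup G_{V_{n,k},n})$. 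Putting $\mathcal V':=\{V_{n,k}:n,k\in\N\}$, a point of $\bigcup\mathcal V$ that is not in $\bigcup\mathcal V'$ must, for every $n$, lie in some $G_{V_{n,k},n}$, whence
\[
\bigcup_{V\in\mathcal V}V\setminus\bigcup_{V'\in\mathcal V'}V'\;\subset\;\bigcap_{n=1}^{\infty}\bigcup_{k=1}^{\infty}G_{V_{n,k},n}.
\]

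The main obstacle is then to control the $1$-capacity of this intersection, since the uniform bound $\capa_1(G_{V,n})<1/n$ is useless after countable subadditivity over $k$. To overcome this, I would add a refinement step: after the countable subfamily $\{V_{n,k}\}$ has been fixed, exploit quasiopenness of each $V_{n,k}$ once more to replace $G_{V_{n,k},n}$ by a smaller open set $\widetilde G_{n,k}$ such that $V_{n,k}\cup\widetilde G_{n,k}$ is still open and $\capa_1(\widetilde G_{n,k})<2^{-n-k}$. With the sharper bounds, countable subadditivity gives $\capa_1\bigl(\bigcap_n\bigcup_k\widetilde G_{n,k}\bigr)\le 2^{-n+1}$ for every $n$, hence zero.

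The delicate part is verifying that the displayed inclusion survives this refinement modulo a $\capa_1$-null set: shrinking $G_{V_{n,k},n}$ to $\widetilde G_{n,k}$ could in principle allow points of $\bigcup\mathcal V$ not in $\bigcup\mathcal V'$ to escape both the $V_{n,k}$ and the $\widetilde G_{n,k}$. My route for handling this would be to argue via Theorem \ref{thm:quasicontinuity} and the outer capacity property that any such ``escapee'' lies in a further open set of capacity at most $2^{-n-k}$ which can be merged into $\widetilde G_{n,k}$, and then to absorb the final null set using \eqref{eq:null sets of Hausdorff measure and capacity}. An alternative route that bypasses this refinement is a Zorn/maximality argument applied directly to $\mathcal H(\bigcup\mathcal V_0)$ over countable $\mathcal V_0\subset\mathcal V$, exploiting that $\capa_1$-nullity is equivalent to $\mathcal H$-nullity; this reduces the whole statement to measure-theoretic $\sigma$-Lindel\"of for $\mathcal H$, at the cost of having to establish local $\sigma$-finiteness of $\mathcal H$ on $\bigcup\mathcal V$, which is itself a nontrivial but separate issue.
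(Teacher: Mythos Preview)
The paper does not prove this theorem; it is quoted verbatim from \cite[Theorem 5.2]{L-CK} and used as a black box. So there is no ``paper's own proof'' to compare against, and the question reduces to whether your sketch stands on its own.

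It does not, and you have in fact put your finger on the gap yourself. After Lindel\"of has produced the countable family $\{V_{n,k}\}_k$, you know
\[
\bigcup_{V\in\mathcal V}V\setminus\bigcup_{k}V_{n,k}\ \subset\ \bigcup_k G_{V_{n,k},n},
\]
but the right-hand side has no capacity control. Your ``refinement'' replaces each $G_{V_{n,k},n}$ by a smaller open $\widetilde G_{n,k}$ with $\capa_1(\widetilde G_{n,k})<2^{-n-k}$ so that $V_{n,k}\cup\widetilde G_{n,k}$ is still open. The problem is that $\bigcup_k(V_{n,k}\cup\widetilde G_{n,k})$ is in general a \emph{proper} open subset of $O_n$, so the displayed inclusion simply fails with $\widetilde G_{n,k}$ in place of $G_{V_{n,k},n}$. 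Concretely, a point $x\in V_0\setminus\bigcup_k V_{n,k}$ was covered in the Lindel\"of step by some $G_{V_{n,k_0},n}$, and there is no mechanism forcing $x$ into any of the shrunken $\widetilde G_{n,k}$; the vague claim that each escapee ``lies in a further open set of capacity at most $2^{-n-k}$'' has no content, because the only structure you have on $x$ is membership in the $1$-finely open set $V_0$, which gives you nothing quantitative about $x$ relative to the \emph{already fixed} countable family. Theorem~\ref{thm:quasicontinuity} is about functions, not sets, and does not supply the missing inclusion.

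Your alternative route via Zorn and $\mathcal H$ is equally incomplete: reducing to ``$\sigma$-Lindel\"of for $\mathcal H$'' just rephrases the problem, and local $\sigma$-finiteness of $\mathcal H$ on an arbitrary $1$-finely open set is neither proved in this paper nor obviously true (points of positive $\capa_1$ already have $\mathcal H(\{x\})=\infty$ by \eqref{eq:null sets of Hausdorff measure and capacity} and the definition of $\mathcal H$). The actual proof in \cite{L-CK} relies on tools specific to the $p=1$ fine topology developed there, and cannot be reconstructed from the Lindel\"of property of the metric topology alone in the way you propose.
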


From now on, $U\subset X$ is always a $1$-quasiopen set.
Note that $1$-quasiopen sets are $\mu$-measurable by \cite[Lemma 9.3]{BB-OD}.

\begin{definition}
A family $\mathcal B$ of $1$-quasiopen sets is a \emph{1-quasicovering} of $U$ if it is countable,
$\bigcup_{V\in \mathcal B}V\subset U$, and
$\capa_1\left(U\setminus \bigcup_{V\in \mathcal B}V\right)=0$.
If every $V\in\mathcal B$ is a $1$-finely open $1$-strict subset of $U$ with
$V\Subset U$, then $\mathcal B$ is a \emph{1-strict quasicovering} of $U$.
Moreover, we say that
\begin{enumerate}
\item $u\in N^{1,1}_{\mathrm{fine-loc}}(U)$ if $u\in N^{1,1}(V)$ for every $1$-finely open
$1$-strict subset $V\Subset U$;
\item $u\in N^{1,1}_{\mathrm{quasi-loc}}(U)$ (respectively $L^1_{\mathrm{quasi-loc}}(U)$)
if there is a $1$-quasicovering $\mathcal B$ of $U$ such that $u\in N^{1,1}(V)$
(respectively $L^1(V)$)
for every $V\in\mathcal B$.
\end{enumerate}
\end{definition}

\begin{proposition}\label{prop:existence of strict quasicovering}
There exists a $1$-strict quasicovering $\mathcal B$ of $U$.
Moreover, the associated Newton-Sobolev functions can be chosen compactly supported in $U$.
\end{proposition}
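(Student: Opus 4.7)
The plan is to reduce to the $1$-finely open case, apply Theorem \ref{thm:strict subsets with Newtonian functions} pointwise, and then extract a countable subcover using the quasi-Lindel\"of principle.

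First I would use Theorem \ref{thm:quasiopen and finely open} to write $U=V\cup N$, where $V$ is $1$-finely open and $\mathcal H(N)=0$; then $\capa_1(N)=0$ by \eqref{eq:null sets of Hausdorff measure and capacity}. For each $x\in V$, Theorem \ref{thm:strict subsets with Newtonian functions} (applied to the $1$-finely open set $V$) yields a $1$-finely open neighborhood $W_x$ of $x$ with $W_x\subset V$ and a function $w_x\in N_0^{1,1}(V)$ such that $0\le w_x\le 1$, $w_x=1$ on $W_x$, and $\supp w_x\Subset V\subset U$. Since $w_x$ extends by zero to an element of $N^{1,1}(X)$ vanishing outside $V\subset U$, it also lies in $N_0^{1,1}(U)$; thus $W_x$ is a $1$-finely open $1$-strict subset of $U$ with $W_x\Subset U$, witnessed by a compactly supported test function.

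Next I would apply the quasi-Lindel\"of principle (Theorem \ref{thm:quasi-Lindelof}) to the family $\{W_x\}_{x\in V}$ of $1$-finely open sets. This produces a countable subfamily $\{W_{x_i}\}_{i=1}^\infty$ with
\[
\capa_1\!\left(\bigcup_{x\in V}W_x\setminus\bigcup_{i=1}^\infty W_{x_i}\right)=0.
\]
Because $x\in W_x$ for every $x\in V$, the union $\bigcup_{x\in V}W_x$ contains $V$. Combining this with $\capa_1(U\setminus V)\le\capa_1(N)=0$ gives
\[
\capa_1\!\left(U\setminus\bigcup_{i=1}^\infty W_{x_i}\right)=0.
\]
Setting $\mathcal B:=\{W_{x_i}\}_{i=1}^\infty$ yields the desired $1$-strict quasicovering, and the associated test functions $w_{x_i}$ are compactly supported in $U$ by construction.

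There is no serious obstacle here; the argument is a direct assembly of Theorems \ref{thm:quasiopen and finely open}, \ref{thm:strict subsets with Newtonian functions}, and \ref{thm:quasi-Lindelof}, mirroring the case $1<p<\infty$ from \cite{BBL-SS}. The only subtlety worth checking is that a test function produced by Theorem \ref{thm:strict subsets with Newtonian functions} for the $1$-finely open set $V$ can be reinterpreted as a test function for $U$ certifying $W_x$ as a $1$-strict subset of $U$; this follows immediately from the inclusion $V\subset U$ and the zero-extension characterisation of $N_0^{1,1}$.
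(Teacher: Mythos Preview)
Your proof is correct and follows essentially the same route as the paper's: decompose $U=V\cup N$ via Theorem \ref{thm:quasiopen and finely open}, apply Theorem \ref{thm:strict subsets with Newtonian functions} at each point of $V$, and pass to a countable subfamily via the quasi-Lindel\"of principle. The paper is slightly terser (it does not spell out that $W_x\Subset U$ follows from $W_x\subset\supp w_x\Subset V$, nor the passage from $N_0^{1,1}(V)$ to $N_0^{1,1}(U)$), but the argument is the same.
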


\begin{proof}
By Theorem \ref{thm:quasiopen and finely open}, we have $U=V\cup N$, where $V$ is $1$-finely open and
$\mathcal H(N)=0$, and then also $\capa_1(N)=0$
by \eqref{eq:null sets of Hausdorff measure and capacity}.
For every $x\in V$, by Theorem \ref{thm:strict subsets with Newtonian functions}
we find a $1$-finely open set
$V_x\ni x$ such that $V_x\Subset V$ and an associated function
$v_x\in N_0^{1,1}(V)$ such that $0\le v_x\le 1$ on $X$,
$v_x=1$ on $V_x$, and $\supp v_x\Subset V$.
The collection $\mathcal B':=\{V_x\}_{x\in V}$ covers $V$,
and by the quasi-Lindel\"of principle (Theorem \ref{thm:quasi-Lindelof})
and the fact that $\capa_1(U\setminus V)=0$, there exists
a countable subcollection $\mathcal B\subset \mathcal B'$ such that
$\capa_1\left(U\setminus \bigcup_{V_x\in \mathcal B}V_x\right)=0$.
\end{proof}

It follows that $N^{1,1}_{\mathrm{fine-loc}}(U)\subset N^{1,1}_{\mathrm{quasi-loc}}(U)$.
From now on, since the proofs given in \cite{BBL-SS} in the case $1<p<\infty$
apply almost verbatim also in our setting, we will only
point out the differences with \cite{BBL-SS}.

\begin{theorem}\label{thm:quasiloc is finely and quasicontinuous}
Let $u\in N^{1,1}_{\mathrm{quasi-loc}}(U)$. Then $u$ if finite $1$-q.e. and $1$-finely continuous
$1$-q.e. on $U$. Thus $u$ is also $1$-quasicontinuous on $U$.
\end{theorem}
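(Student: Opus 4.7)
The plan is to localize the problem using the given 1-quasicovering together with the strict-subset cutoffs from Proposition \ref{prop:existence of strict quasicovering}, and then transfer the conclusions across a product with such a cutoff by appealing to the quasicontinuity of global Newton-Sobolev functions on $X$.

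By hypothesis there is a 1-quasicovering $\mathcal{B}$ of $U$ with $u \in N^{1,1}(V)$ for every $V \in \mathcal{B}$. Fix such a $V$; applying Proposition \ref{prop:existence of strict quasicovering} to $V$ yields a 1-strict quasicovering $\{W_j\}_{j \in \N}$ of $V$ by 1-finely open strict subsets, together with functions $w_j \in N_0^{1,1}(V)$ satisfying $0 \le w_j \le 1$, $w_j = 1$ on $W_j$, and $\supp w_j \Subset V$. I would then verify that $uw_j$, extended by zero outside $\supp w_j$, belongs to $N^{1,1}(X)$: by the Leibniz rule (\cite[Theorem 2.15]{BB}) the product has a 1-weak upper gradient dominated by $|u|g_{w_j} + w_j g_u$, which is in $L^1$ and supported in the compact set $\supp w_j \subset V$ where $u$ is Newton-Sobolev. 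Theorem \ref{thm:quasicontinuity} then makes $uw_j$ 1-quasicontinuous on $X$, and Theorem \ref{thm:fine continuity and quasicontinuity equivalence}, applied on $X$, upgrades this to $uw_j$ being finite and 1-finely continuous 1-q.e.\ on $X$.

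Because $uw_j = u$ on the 1-finely open set $W_j$, these properties transfer to $u$ on each $W_j$, and hence hold 1-q.e.\ on $V$ by the quasicovering property of $\{W_j\}$. Letting $V$ range over $\mathcal{B}$ and using the quasicovering property of $\mathcal{B}$ yields the first two conclusions 1-q.e.\ on $U$. A final application of Theorem \ref{thm:fine continuity and quasicontinuity equivalence}, this time in the direction from fine continuity to quasicontinuity on the 1-quasiopen set $U$, delivers the 1-quasicontinuity of $u$ on $U$. The main technical obstacle I anticipate is the Newton-Sobolev regularity of the zero extension of $uw_j$: the Leibniz rule is customarily stated on open sets while $V$ is only 1-quasiopen, so one must exploit the compact containment $\supp w_j \Subset V$ together with the open enlargements afforded by the 1-quasiopenness of $V$ (adding sets of arbitrarily small 1-capacity) to upgrade the local gradient estimate to a genuine global 1-weak upper gradient on $X$; the resulting capacity-zero discrepancies do not affect the q.e.\ conclusions thanks to \eqref{eq:quasieverywhere equivalence classes}.
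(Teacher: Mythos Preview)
Your proposal is correct and follows essentially the same route as the paper, which simply directs the reader to reproduce the proof of \cite[Theorem~4.4]{BBL-SS} with the reference to their Proposition~4.2 replaced by Proposition~\ref{prop:existence of strict quasicovering} and the fine/quasi-continuity equivalences replaced by Theorem~\ref{thm:fine continuity and quasicontinuity equivalence}; your two-layer covering (the given $\mathcal B$, then a strict quasicovering of each $V\in\mathcal B$ producing cutoffs $w_j$) together with the Leibniz-rule argument for $uw_j\in N^{1,1}(X)$ is exactly that proof. The technical point you flag about the zero extension of $uw_j$ is real but is handled in the way you indicate (using that $\supp w_j\Subset V$ and the quasiopenness of $V$), so there is no gap.
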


\begin{proof}
Follow verbatim the proof of \cite[Theorem 4.4]{BBL-SS}, except that replace the reference to
\cite[Proposition 4.2]{BBL-SS} by Proposition \ref{prop:existence of strict quasicovering}, and the references to \cite[Theorem 4.9(b)]{BBL-WC} and
\cite[Theorem 1.4(b)]{BBL-CCK}  by
Theorem \ref{thm:fine continuity and quasicontinuity equivalence}.
\end{proof}

\begin{definition}
A nonnegative function $\widetilde{g}_u$ on $U$ is a
\emph{1-fine upper gradient} of
$u\in N^{1,1}_{\mathrm{quasi-loc}}(U)$ if there is a quasicovering $\mathcal B$ of $U$ such that for every $V\in\mathcal B$,
$u\in N^{1,1}(V)$  and $\widetilde{g}_u=g_{u,V}$ a.e. on $V$,
where $g_{u,V}$ is the minimal $1$-weak upper gradient of $u$ in $V$.
\end{definition}

\begin{lemma}
If $u\in N^{1,1}_{\mathrm{quasi-loc}}(U)$, then it has a unique
(in the a.e. sense) $1$-fine upper gradient
$\widetilde{g}_u$.
\end{lemma}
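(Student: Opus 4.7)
The strategy is to exploit that on any $\mu$-measurable set $V$ with $u\in N^{1,1}(V)$, the minimal $1$-weak upper gradient $g_{u,V}$ is uniquely determined up to sets of $\mu$-measure zero and restricts properly to smaller $\mu$-measurable subsets: if $D_1\subset D_2$ are $\mu$-measurable and $u\in N^{1,1}(D_2)$, then $g_{u,D_2}=g_{u,D_1}$ a.e.\ on $D_1$ (a standard fact built on \cite[Theorem 2.25]{BB}). A $1$-fine upper gradient is then obtained simply by gluing these minimal upper gradients over the members of a $1$-quasicovering; the restriction property will give both consistency on overlaps and uniqueness of the final object.

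For existence, I would take the $1$-quasicovering $\mathcal{B}$ provided by the assumption $u\in N^{1,1}_{\mathrm{quasi-loc}}(U)$. For any $V,V'\in\mathcal{B}$, the intersection $V\cap V'$ is $1$-quasiopen and hence $\mu$-measurable (by the remark before the definition of $1$-quasicovering), so the restriction property applied to $V\cap V'\subset V$ and $V\cap V'\subset V'$ yields $g_{u,V}=g_{u,V\cap V'}=g_{u,V'}$ a.e.\ on $V\cap V'$. Since $\mathcal{B}$ is countable, the union of the corresponding $\mu$-null exceptional sets is still $\mu$-null. Setting $\widetilde{g}_u(x):=g_{u,V}(x)$ whenever $x\in V\in\mathcal{B}$ therefore defines $\widetilde{g}_u$ unambiguously a.e.\ on $\bigcup_{V\in\mathcal{B}}V$; because $\capa_1\bigl(U\setminus\bigcup_{V\in\mathcal{B}}V\bigr)=0$ forces the remaining set to have $\mu$-measure zero as well, extending by $0$ produces a nonnegative function on $U$ which is, by construction, a $1$-fine upper gradient of $u$ with respect to the same $\mathcal{B}$.

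For uniqueness, suppose $\widetilde{g}_u^{\,1}$ and $\widetilde{g}_u^{\,2}$ are two $1$-fine upper gradients of $u$, with associated $1$-quasicoverings $\mathcal{B}_1$ and $\mathcal{B}_2$. The countable family $\mathcal{B}_{12}:=\{V_1\cap V_2:\,V_1\in\mathcal{B}_1,\,V_2\in\mathcal{B}_2\}$ again consists of $1$-quasiopen sets, and its union equals $\bigl(\bigcup\mathcal{B}_1\bigr)\cap\bigl(\bigcup\mathcal{B}_2\bigr)$, whose complement in $U$ lies in the union of the two $\capa_1$-null sets $U\setminus\bigcup\mathcal{B}_1$ and $U\setminus\bigcup\mathcal{B}_2$ and therefore also has $1$-capacity zero. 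On each $V_1\cap V_2$ the restriction property gives $\widetilde{g}_u^{\,1}=g_{u,V_1}=g_{u,V_1\cap V_2}=g_{u,V_2}=\widetilde{g}_u^{\,2}$ a.e., and since $\mathcal{B}_{12}$ is countable the agreement extends to a.e.\ on $U$. The one technical point I expect to require care is precisely the restriction property for minimal $1$-weak upper gradients on $\mu$-measurable (here $1$-quasiopen) subsets; everything else is a countable-overlap bookkeeping argument.
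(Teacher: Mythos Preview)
Your proposal is correct and follows essentially the same approach as the paper, which simply instructs the reader to repeat the proof of \cite[Lemma 5.2]{BBL-SS} verbatim; that argument is precisely the gluing/refinement argument you outline, hinging on the restriction property of minimal $1$-weak upper gradients on $\mu$-measurable (here $1$-quasiopen) subsets.
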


\begin{proof}
Follow verbatim the proof of \cite[Lemma 5.2]{BBL-SS}.
\end{proof}

\begin{theorem}\label{thm:fine ug is weak ug}
If $u\in N^{1,1}_{\mathrm{quasi-loc}}(U)$ and $\widetilde{g}_u$ is a $1$-fine upper gradient
of $u$, then $\widetilde{g}_u$ is a $1$-weak upper gradient of $u$ in $U$.
\end{theorem}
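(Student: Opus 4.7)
The goal is to verify that for $1$-almost every curve $\gamma \colon [0, L] \to U$, one has $|u(\gamma(0)) - u(\gamma(L))| \le \int_\gamma \widetilde{g}_u \, ds$. Fix a $1$-quasicovering $\mathcal{B} = \{V_k\}_{k \in \N}$ of $U$ witnessing that $\widetilde{g}_u$ is a $1$-fine upper gradient of $u$, so that $u \in N^{1,1}(V_k)$ and $\widetilde{g}_u = g_{u, V_k}$ $\mu$-a.e.\ on $V_k$ for every $k$; hence $\capa_1(N) = 0$ for $N := U \setminus \bigcup_k V_k$. By Theorem \ref{thm:quasiloc is finely and quasicontinuous}, there is a set $E \subset U$ with $\capa_1(E) = 0$ outside of which $u$ is finite and $1$-finely continuous.

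First I would discard the following exceptional curve families, each having zero $1$-modulus, hence so does their countable union:
\begin{enumerate}[(i)]
\item curves whose image meets $N \cup E$;
\item for each $k$, curves contained in $V_k$ along which the upper gradient inequality for $u$ with $g_{u, V_k}$ fails;
\item curves along which $\widetilde{g}_u$ fails to be integrable with respect to arc length.
\end{enumerate}
For every surviving curve $\gamma$, the image lies in $\bigcup_k V_k$, and $u$ is finite and $1$-finely continuous at each point of $\gamma([0, L])$.

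The second step is to piece the local inequalities together, and here lies the main obstacle: each $V_k$ is only $1$-finely open (equivalently $1$-quasiopen up to $\mathcal{H}$-null sets), so $\gamma^{-1}(V_k)$ need not be open in $[0, L]$. To resolve this, I would exploit $1$-quasiopenness: for each $k$ choose open sets $G_{k, j}$ with $\capa_1(G_{k, j}) < 2^{-k - j}$ such that $V_k \cup G_{k, j}$ is open, and further remove, via a Fuglede-type capacity-to-modulus estimate, those curves for which the associated summability conditions fail. After these removals, for every $k$ the set $\gamma^{-1}(V_k)$ agrees with $\gamma^{-1}(V_k \cup G_{k, j})$ for some $j$, hence is relatively open in $[0, L]$. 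Then $[0, L] = \bigcup_k \gamma^{-1}(V_k)$ decomposes into countably many disjoint open subintervals $J = (a_J, b_J)$, each contained in some $\gamma^{-1}(V_{k(J)})$, and the upper gradient inequality in $V_{k(J)}$ yields
\[
|u(\gamma(a_J)) - u(\gamma(b_J))| \le \int_J \widetilde{g}_u(\gamma(t)) \, |\gamma'(t)| \, dt.
\]

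Finally, the $1$-fine continuity of $u$ along $\gamma$ guarantees that the values of $u$ at common endpoints of adjacent components are attained as fine limits along $\gamma$, so the piecewise estimates telescope into the desired global upper gradient inequality. The technical heart of the argument is the second step, where $1$-fine continuity replaces the ordinary continuity available on open sets and plays the analogous role to the corresponding step in \cite{BBL-SS} for $1 < p < \infty$.
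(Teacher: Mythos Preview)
Your outline follows the strategy of \cite[Theorem~5.3]{BBL-SS}, which the paper invokes verbatim, but the execution of the last two steps has real gaps. The decomposition you describe does not exist: once $\gamma$ avoids $N$ you have $\bigcup_k \gamma^{-1}(V_k)=[0,L]$, and a connected interval cannot be written nontrivially as a countable disjoint union of relatively open subintervals. What is actually used is compactness: after one knows that each $\gamma^{-1}(V_k)$ is relatively open in $[0,L]$ (this is the statement that $1$-quasiopen sets are $1$-path open, proved in \cite{BBM}), finitely many of them cover $[0,L]$, and a Lebesgue-number argument produces a \emph{finite} partition $0=t_0<\cdots<t_n=L$ with $\gamma([t_{i-1},t_i])\subset V_{k(i)}$. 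The global inequality then follows from the triangle inequality applied to this finite chain; $1$-fine continuity plays no role in the telescoping and is not the technical heart of the proof---it enters only through Theorem~\ref{thm:quasiloc is finely and quasicontinuous} to guarantee that $u$ is finite $1$-q.e.\ along $\gamma$.

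Two further points. Your exceptional family (ii) is too small: you discard curves lying entirely in $V_k$, but what you later need is the upper gradient inequality for \emph{subcurves} $\gamma|_{[t_{i-1},t_i]}$ of a curve $\gamma$ in $U$; the standard remedy is the fact (see e.g.\ \cite[Lemma~1.40]{BB}) that for $1$-a.e.\ curve in $V_k$ every subcurve also satisfies the inequality, combined with the observation that curves in $U$ having a bad subcurve in $V_k$ form a $1$-modulus null family. And your sketch of path-openness via the sets $G_{k,j}$ does not close: from $\capa_1(G_{k,j})\to 0$ one cannot deduce by a ``capacity-to-modulus'' estimate alone that $1$-a.e.\ curve avoids some $G_{k,j}$ entirely (a curve may meet every $G_{k,j}$ without meeting $\bigcap_j G_{k,j}$). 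The clean route, taken in \cite{BBL-SS}, is to quote the $1$-path openness of $1$-quasiopen sets from \cite{BBM} directly rather than rederive it.
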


\begin{proof}
Follow verbatim the proof of \cite[Theorem 5.3]{BBL-SS}.
\end{proof}

\begin{proposition}\label{prop:existence of extensions}
If $u\in N^{1,1}_{\mathrm{quasi-loc}}(U)$, then there is a $1$-strict quasicovering $\mathcal B$
of $U$ such that for every $V\in \mathcal B$, there exists $u_V\in N^{1,1}(X)$ with
$u=u_V$ on $V$.
\end{proposition}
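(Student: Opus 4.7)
The plan is to mimic the proof of the analogous result for $1<p<\infty$ in \cite{BBL-SS}, with the $p=1$ tools established above playing the role of their higher-$p$ counterparts. First, since $u\in N^{1,1}_{\mathrm{quasi-loc}}(U)$, fix a $1$-quasicovering $\{V_i\}_{i=1}^\infty$ of $U$ with $u\in N^{1,1}(V_i)$ for every $i$. Using Theorem~\ref{thm:quasiopen and finely open} together with \eqref{eq:null sets of Hausdorff measure and capacity}, I may discard an $\mathcal H$-null (hence $1$-capacity zero) set from each $V_i$ and assume every $V_i$ is $1$-finely open.

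By Theorem~\ref{thm:quasiloc is finely and quasicontinuous}, $u$ is finite and $1$-finely continuous $1$-q.e. on $U$. For each such ``good'' point $x\in V_i$, the set $V_i\cap\{|u|<|u(x)|+1\}$ is a $1$-finely open neighborhood of $x$ on which $u$ is bounded. Applying Theorem~\ref{thm:strict subsets with Newtonian functions} inside this set yields a $1$-finely open $W_x\ni x$ with $W_x\Subset V_i$, together with a cutoff $\psi_x\in N_0^{1,1}(V_i)$ satisfying $0\le \psi_x\le 1$, $\psi_x=1$ on $W_x$, and $\supp \psi_x$ compact and contained in the region where $u$ is bounded.

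Set $u_x:=u\psi_x$ on $V_i$ and extend by $0$ off $V_i$. Since $|u|$ is bounded on $\supp \psi_x$ and $\psi_x\in N^{1,1}(X)$ has compact support, the Leibniz rule (\cite[Theorem~2.15]{BB}) gives $u_x\in N^{1,1}(V_i)$ with a weak upper gradient dominated by $\psi_x g_u+|u|g_{\psi_x}\in L^1(X)$. Because $\psi_x$---and hence $u_x$---vanishes outside the compact set $\supp \psi_x\subset V_i$, the zero extension of $u_x$ is still in $N^{1,1}(X)$, and by construction $u_x=u$ on $W_x$. To finish, apply the quasi-Lindel\"of principle (Theorem~\ref{thm:quasi-Lindelof}) to the collection $\{W_x\}$ (indexed over all $i$ and all good $x\in V_i$) to extract a countable subfamily $\mathcal B$ whose union covers $U$ up to $1$-capacity zero; this $\mathcal B$ is the desired $1$-strict quasicovering, with associated extensions $u_W:=u_x\in N^{1,1}(X)$ agreeing with $u$ on $W$.

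The main technical subtlety lies in the extension step: one must check that $u\psi_x$, extended by $0$ across the boundary of the merely $1$-finely open set $V_i$, really belongs to $N^{1,1}(X)$. This is precisely why the preliminary shrinking to a neighborhood on which $u$ is bounded is essential---it guarantees $|u|g_{\psi_x}\in L^1$---and why $\psi_x$ is taken from $N_0^{1,1}(V_i)$ rather than merely $N^{1,1}(V_i)$, so that its own zero extension already lies in $N^{1,1}(X)$ and this property transfers to the product.
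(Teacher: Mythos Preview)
Your proposal is correct and takes essentially the same approach as the paper, which simply instructs the reader to follow \cite[Proposition~5.5]{BBL-SS} verbatim with Theorem~\ref{thm:quasiloc is finely and quasicontinuous} and Proposition~\ref{prop:existence of strict quasicovering} substituted for their $p>1$ counterparts. The only cosmetic difference is that you invoke Theorem~\ref{thm:strict subsets with Newtonian functions} together with the quasi-Lindel\"of principle directly, whereas the paper cites the packaged Proposition~\ref{prop:existence of strict quasicovering}---but the proof of that proposition is precisely that combination, so there is no substantive distinction.
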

\begin{proof}
Follow verbatim the proof of \cite[Proposition 5.5]{BBL-SS}, except that replace
the reference to
\cite[Theorem 4.4]{BBL-SS} by
Theorem \ref{thm:quasiloc is finely and quasicontinuous}, and
\cite[Proposition 4.2]{BBL-SS} by Proposition \ref{prop:existence of strict quasicovering}.
\end{proof}

The following definition is originally from Kilpel\"ainen--Mal\'y
\cite{KiMa}.

\begin{definition}
Let $U\subset \R^n$. A function $u\in L^1(U)$ is in $W^{1,1}(U)$ if
\begin{enumerate}
\item there is a quasicovering $\mathcal B$ of $U$ such that for every $V\in\mathcal B$
there is an open set $G_V\supset V$ and $u_V\in W^{1,1}(G_V)$ such that
$u=u_V$ on $V$, and
\item the \emph{fine gradient} $\nabla u$, defined by $\nabla u=\nabla u_V$ a.e. on each
$V\in \mathcal B$, also belongs to $L^1(U)$.
\end{enumerate}
Moreover, let
\[
\Vert u\Vert_{W^{1,1}(U)}:=\int_U (|u|+|\nabla u|)\,dx.
\]
\end{definition}

Recall that we constantly assume $U$ to be a $1$-quasiopen set.

\begin{theorem}\label{thm:N is same as W}
Let $U\subset \R^n$. Then $u\in W^{1,1}(U)$ if and only if there exists
$v\in N^{1,1}(U)$ such that $v=u$ a.e. on $U$. Moreover, $g_{v}=|\nabla u|$ a.e. on $U$
and $\Vert v\Vert_{N^{1,1}(U)}=\Vert u\Vert_{W^{1,1}(U)}$.
\end{theorem}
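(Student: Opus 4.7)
The plan is to follow the strategy used in \cite{BBL-SS} for the case $1<p<\infty$, which, as the author indicates, transfers almost verbatim to our setting. The key Euclidean input is the classical correspondence between Sobolev and Newton-Sobolev functions on open sets: for any open $G \subset \R^n$, every $f \in W^{1,1}(G)$ admits a quasicontinuous representative $\tilde f \in N^{1,1}(G)$ with minimal $1$-weak upper gradient $g_{\tilde f} = |\nabla f|$ a.e., and conversely every $w \in N^{1,1}(G)$ lies, as an a.e.-equivalence class, in $W^{1,1}(G)$ with $|\nabla w| = g_w$ a.e. With this identification available on each $G_V$, the proof becomes a matter of assembling local pieces along a quasicovering and invoking the fine-to-weak passage provided by Theorem \ref{thm:fine ug is weak ug}.

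For the direction $u \in W^{1,1}(U) \Rightarrow v \in N^{1,1}(U)$, I would take the quasicovering $\mathcal B$ and open neighborhoods $G_V \supset V$ furnished by the definition of $W^{1,1}(U)$, replace each $u_V \in W^{1,1}(G_V)$ by its quasicontinuous Newton-Sobolev representative $\tilde u_V \in N^{1,1}(G_V)$, and define
\[
v := \tilde u_V \quad \text{on each } V \in \mathcal B.
\]
On overlaps $V \cap V'$ both $\tilde u_V$ and $\tilde u_{V'}$ equal $u$ a.e.\ and hence equal each other a.e.; restricted to the $\mu$-measurable set $V\cap V'$ they are two Newton-Sobolev functions agreeing a.e., so by \eqref{eq:quasieverywhere equivalence classes} they agree $1$-q.e.\ there, and $v$ is unambiguously defined $1$-q.e.\ on $U$. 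The natural candidate $\widetilde g := g_{\tilde u_V}$ on $V$ is then a $1$-fine upper gradient of $v$ that equals $|\nabla u| \in L^1(U)$ a.e., and Theorem \ref{thm:fine ug is weak ug} upgrades it to a $1$-weak upper gradient of $v$ in $U$, giving $v \in N^{1,1}(U)$ with $g_v \le |\nabla u|$ a.e.\ and $v = u$ a.e.

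For the converse, given $v \in N^{1,1}(U)$ I would invoke Proposition \ref{prop:existence of extensions} to obtain a $1$-strict quasicovering $\mathcal B$ of $U$ together with extensions $u_V \in N^{1,1}(\R^n)$ satisfying $v = u_V$ on each $V$. Identifying $u_V$ with its $W^{1,1}(\R^n)$ representative and setting $G_V := \R^n$, condition (1) of the definition of $W^{1,1}(U)$ is satisfied by $u := v$, and the fine gradient $\nabla u$ is well defined a.e.\ since on $V \cap V'$ the functions $u_V$ and $u_{V'}$ are a.e.-equal and hence have identical weak gradients a.e.\ by locality of the weak derivative. Piecewise, $|\nabla u| = |\nabla u_V| = g_{u_V} = g_v$ a.e.\ on $V$, the last identity being the locality of the minimal $1$-weak upper gradient on the $\mu$-measurable set $V \subset U$; this simultaneously yields $|\nabla u| \in L^1(U)$, the pointwise equality $g_v = |\nabla u|$ a.e.\ on $U$, and, by integration over $U$, the equality of norms.

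The step I expect to be most delicate is the a.e.-to-$1$-q.e.\ upgrade for the locally defined quasicontinuous representatives $\tilde u_V$ on overlaps, since this is what makes the pointwise $v$ unambiguous modulo a set of $1$-capacity zero and permits a single fine upper gradient to be exhibited on all of $U$. This is precisely where \eqref{eq:quasieverywhere equivalence classes}, combined with Theorem \ref{thm:quasiloc is finely and quasicontinuous}, carries the weight; once that point is secured, the remainder is routine bookkeeping that mirrors \cite{BBL-SS} verbatim.
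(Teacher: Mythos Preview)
Your proposal is correct and follows essentially the same route as the paper, which simply instructs the reader to transcribe the proof of \cite[Theorem~5.7]{BBL-SS} with the reference substitutions you have anticipated (Proposition~\ref{prop:existence of extensions}, Theorem~\ref{thm:fine ug is weak ug}, and the classical $W^{1,1}\leftrightarrow N^{1,1}$ identification on open sets, which is \cite[Corollary~A.4]{BB}). One minor slip: the a.e.-to-$1$-q.e.\ upgrade on overlaps is not \eqref{eq:quasieverywhere equivalence classes} (that is the reverse implication) but rather the standard fact that Newton-Sobolev functions agreeing a.e.\ agree $1$-q.e., obtainable for instance via quasicontinuity as in the proof of Theorem~\ref{thm:quasicontinuous representatives}.
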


Here $g_v$ is the minimal $1$-weak upper gradient of $v$ in $U$.

\begin{proof}
Follow verbatim the proof of \cite[Theorem 5.7]{BBL-SS}, except that replace the reference to
\cite[Proposition 5.5]{BBL-SS} by Proposition \ref{prop:existence of extensions},
\cite[Proposition A.12]{BB} by \cite[Corollary A.4]{BB},
and \cite[Theorem 5.4]{BBL-SS} by Theorem \ref{thm:fine ug is weak ug}.
\end{proof}

Returning momentarily to the metric space setting, define the space
\[
\widehat{N}^{1,1}(U):=\{u:\,u=v \textrm{ a.e. for some }v\in N^{1,1}(U)\}.
\]

\begin{theorem}\label{thm:quasicontinuous representatives}
Let $u\in \widehat{N}^{1,1}(U)$. Then $u\in N^{1,1}(U)$ if and only
if $u$ is $1$-quasicontinuous on $U$.
\end{theorem}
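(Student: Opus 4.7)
The plan is to handle the two directions separately. The forward direction follows by straightforward chaining of earlier results, while the reverse direction reduces to showing that two $1$-quasicontinuous functions on $U$ that agree $\mu$-a.e.\ must in fact agree $1$-q.e.\ on $U$. For the forward direction, I would take any $1$-strict quasicovering $\mathcal B$ of $U$ given by Proposition \ref{prop:existence of strict quasicovering}; since every $V\in \mathcal B$ is contained in $U$, restriction gives $u\in N^{1,1}(V)$ for each such $V$, so $u\in N^{1,1}_{\mathrm{quasi-loc}}(U)$, and Theorem \ref{thm:quasiloc is finely and quasicontinuous} then yields the $1$-quasicontinuity of $u$.

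For the reverse direction, the definition of $\widehat{N}^{1,1}(U)$ provides $v\in N^{1,1}(U)$ with $u=v$ $\mu$-a.e.\ on $U$, and the forward direction applied to $v$ shows that $v$ is also $1$-quasicontinuous on $U$. The main step is to prove $u=v$ $1$-q.e.\ on $U$. Once this is done, $\capa_1(\{u\neq v\})=0$, so by \eqref{eq:null sets of Hausdorff measure and capacity} the exceptional set has zero codimension-one Hausdorff measure, whence $1$-almost every curve avoids it; thus $u$ inherits the minimal $1$-weak upper gradient of $v$ in $U$, and since $u\in L^1(U)$ we conclude $u\in N^{1,1}(U)$.

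The main obstacle is therefore the $1$-q.e.\ coincidence of $u$ and $v$. I would apply Theorem \ref{thm:fine continuity and quasicontinuity equivalence} to both functions to obtain a set $N\subset U$ with $\capa_1(N)=0$ outside which both $u$ and $v$ are finite and $1$-finely continuous, and use Theorem \ref{thm:quasiopen and finely open} to write $U=V_0\cup N_0$ with $V_0$ $1$-finely open and $\capa_1(N_0)=0$. For $x\in V_0\setminus N$ I assume, toward a contradiction, that say $u(x)>v(x)$, and set $\eps:=(u(x)-v(x))/3$. By $1$-fine continuity in the induced topology on $U$, there is a $1$-finely open set $V'\subset X$ with $x\in V'$ such that $u>u(x)-\eps>v(x)+\eps>v$ on $V'\cap U$; in particular $u>v$ on the $1$-finely open neighborhood $V'\cap V_0$ of $x$. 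Its complement is $1$-thin at $x$, and a standard application of the Sobolev inequality \eqref{eq:sobolev inequality} to admissible test functions in the definition of $\rcapa_1$ converts this $1$-thinness into the density bound $\mu((V'\cap V_0)\cap B(x,r))/\mu(B(x,r))\to 1$ as $r\to 0$, which contradicts $u=v$ $\mu$-a.e. Hence $\{u\neq v\}\cap U\subset N\cup N_0$, a set of $1$-capacity zero, completing the reduction.
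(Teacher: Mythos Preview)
Your proof is correct and follows the same overall structure as the paper's: both directions hinge on Theorem~\ref{thm:quasiloc is finely and quasicontinuous}, and the reverse direction reduces to showing that two $1$-quasicontinuous functions on $U$ that agree $\mu$-a.e.\ agree $1$-q.e.

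The main difference lies in how that key step is handled. The paper simply cites \cite[Proposition~5.23]{BB} together with \cite[Proposition~4.2]{BBM}, which give exactly this coincidence result for quasicontinuous functions on quasiopen sets. You instead provide a self-contained argument: pass to $1$-fine continuity via Theorem~\ref{thm:fine continuity and quasicontinuity equivalence}, and then observe that a point where $u\neq v$ would yield a $1$-finely open neighbourhood on which $u\neq v$; the Sobolev inequality \eqref{eq:sobolev inequality} applied to admissible functions for $\rcapa_1$ shows that any $1$-finely open set has lower density~$1$ at each of its points, so this neighbourhood has positive measure, contradicting $u=v$ a.e. This is a legitimate and more elementary route, essentially reproving the cited lemma inside the argument. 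A minor further difference is your final step: you pass to $N^{1,1}(U)$ via the curve argument (the set $\{u\neq v\}$ has $\mathcal H$-measure zero, hence is avoided by $1$-a.e.\ curve), whereas the paper just invokes \eqref{eq:quasieverywhere equivalence classes} to get $\Vert u-v\Vert_{N^{1,1}(U)}=0$ directly. Both work; the paper's route is shorter.
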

\begin{proof}
Assume that $u$ is $1$-quasicontinuous on $U$. There is $v\in N^{1,1}(U)$ such that $u=v$ a.e. on $U$.
By Theorem \ref{thm:quasiloc is finely and quasicontinuous}, $v$ is $1$-quasicontinuous on $U$.
By \cite[Proposition 5.23]{BB} and \cite[Proposition 4.2]{BBM},
$u=v$ $1$-q.e. on $U$, and thus $u\in N^{1,1}(U)$
by \eqref{eq:quasieverywhere equivalence classes}.

The converse follows from Theorem \ref{thm:quasiloc is finely and quasicontinuous}.
\end{proof}

\begin{theorem}
Let $U\subset \R^n$, and let $u$ be an everywhere defined function
on $U$. Then $u\in N^{1,1}(U)$ if and only if $u\in W^{1,1}(U)$ and $u$
is $1$-quasicontinuous. Moreover, then
$\Vert u\Vert_{N^{1,1}(U)}=\Vert u\Vert_{W^{1,1}(U)}$.
\end{theorem}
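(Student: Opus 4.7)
The plan is to assemble this final statement directly from the three workhorse results already proved in this section: Theorem \ref{thm:N is same as W} identifying $W^{1,1}$ and (representatives of) $N^{1,1}$, Theorem \ref{thm:quasiloc is finely and quasicontinuous} giving quasicontinuity of every function in $N^{1,1}_{\mathrm{quasi-loc}}(U)$, and Theorem \ref{thm:quasicontinuous representatives} singling out the genuine $N^{1,1}$-representative as the $1$-quasicontinuous one. Nothing new has to be built.

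\textbf{Forward direction.} Assume $u\in N^{1,1}(U)$; in particular $u$ is everywhere defined with $u\in L^1(U)$ and a minimal $1$-weak upper gradient $g_u\in L^1(U)$. Since trivially $u\in N^{1,1}_{\mathrm{quasi-loc}}(U)$, Theorem \ref{thm:quasiloc is finely and quasicontinuous} gives that $u$ is $1$-quasicontinuous on $U$. Applying Theorem \ref{thm:N is same as W} with the choice $v:=u$ (the existence clause of that theorem holds tautologically) yields $u\in W^{1,1}(U)$ together with $|\nabla u|=g_u$ a.e.\ and $\Vert u\Vert_{N^{1,1}(U)}=\Vert u\Vert_{W^{1,1}(U)}$.

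\textbf{Backward direction.} Conversely, assume $u\in W^{1,1}(U)$ is $1$-quasicontinuous on $U$. Theorem \ref{thm:N is same as W} produces some $v\in N^{1,1}(U)$ with $v=u$ a.e.\ on $U$, so by definition $u\in \widehat{N}^{1,1}(U)$. Since $u$ is $1$-quasicontinuous, Theorem \ref{thm:quasicontinuous representatives} concludes that $u\in N^{1,1}(U)$.

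\textbf{Norm equality in the backward direction.} It remains to upgrade the norm identity from $v$ to $u$. The function $v$ is itself $1$-quasicontinuous by Theorem \ref{thm:quasiloc is finely and quasicontinuous}, and $u$ is $1$-quasicontinuous by hypothesis; since $u=v$ $\mu$-a.e., the usual fact that two $1$-quasicontinuous functions agreeing $\mu$-a.e.\ on $U$ must agree $1$-q.e.\ (the same \cite[Proposition 5.23]{BB} and \cite[Proposition 4.2]{BBM} invoked in the proof of Theorem \ref{thm:quasicontinuous representatives}) gives $u=v$ $1$-q.e.\ on $U$. Then \eqref{eq:quasieverywhere equivalence classes} yields $\Vert u\Vert_{N^{1,1}(U)}=\Vert v\Vert_{N^{1,1}(U)}$, and Theorem \ref{thm:N is same as W} gives $\Vert v\Vert_{N^{1,1}(U)}=\Vert u\Vert_{W^{1,1}(U)}$; chaining these equalities finishes the proof.

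I do not expect any genuine obstacle: the statement is a clean synthesis of earlier results, and the one mildly delicate point — passing from ``equal $\mu$-a.e.'' to ``equal $1$-q.e.'' so that the $N^{1,1}$-seminorms of $u$ and $v$ match — is already handled by the same two cited propositions used to prove Theorem \ref{thm:quasicontinuous representatives}, so no new ingredient is required.
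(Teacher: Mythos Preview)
Your proof is correct and is essentially an expanded version of the paper's one-line argument, which simply cites Theorems~\ref{thm:N is same as W} and~\ref{thm:quasicontinuous representatives}. Your direct invocation of Theorem~\ref{thm:quasiloc is finely and quasicontinuous} in the forward direction is exactly the ``converse'' half of Theorem~\ref{thm:quasicontinuous representatives}, so the ingredients are identical.
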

\begin{proof}
This follows from Theorems \ref{thm:N is same as W} and
\ref{thm:quasicontinuous representatives}.
\end{proof}

\noindent Address:\\

\noindent University of Jyvaskyla\\
Department of Mathematics and Statistics\\
P.O. Box 35, FI-40014 University of Jyvaskyla, Finland\\
E-mail: {\tt panu.k.lahti@jyu.fi}

\end{document}